\newtheorem{theorem}{Theorem}[section]
\newtheorem{corollary}[theorem]{Corollary}
\newtheorem{lemma}[theorem]{Lemma}
\newtheorem{construction}[theorem]{Construction}
\newtheorem{problem}[theorem]{Problem}
\theoremstyle{definition}
\newtheorem{defn}[theorem]{Definition}
\newtheorem{example}[theorem]{Example}
\newtheorem*{notation}{Notation}
\newcommand{\adj}{\mathop{adj}}
\newcommand{\C}{\mathbb{C}}
\newcommand{\col}{\mathop{col}}
\newcommand{\co}{\colon\thinspace}
\newcommand{\dual}{\mathcal{D}}
\newcommand{\hasse}{\mathcal{H}}
\newcommand{\of}{\!:\!}
\newcommand{\inc}{\mathop{inc}}
\newcommand{\isarc}{\mathop{is\_arc}}
\newcommand{\iscol}{\mathop{is\_col}}
\newcommand{\isface}{\mathop{is\_face}}
\newcommand{\isnode}{\mathop{is\_node}}
\newcommand{\N}{\mathbb{N}}
\newcommand{\tri}{\mathcal{T}}
\newcommand{\tw}{\mathrm{tw}}
\newcommand{\Z}{\mathbb{Z}}
\begin{document}

\title[Courcelle's theorem for triangulations]{Courcelle's theorem
    for triangulations}
\author{Benjamin A.\ Burton}
\address{School of Mathematics and Physics \\
    The University of Queensland \\
    Brisbane QLD 4072 \\
    Australia}
\email{bab@maths.uq.edu.au}
\author{Rodney G. Downey}
\address{School of Mathematics, Statistics and Operations Research \\
    Victoria University, New Zealand.}
\email{Rod.Downey@msor.vuw.ac.nz}
\thanks{The first author is supported by the Australian Research Council
    under the Discovery Projects funding scheme
    (projects DP1094516, DP110101104),
    and the second author is supported by the Marsden fund of New Zealand.}
\subjclass[2000]{%
    Primary
    57Q15, % Triangulating manifolds
    68Q25; % Analysis of algorithms and problem complexity
    Secondary
    68W05} % Algorithms / non-numerical algorithms
\keywords{Triangulations, parameterised complexity,
    3-manifolds, discrete Morse theory, Turaev-Viro invariants}

\begin{abstract}
    In graph theory, Courcelle's theorem essentially states
    that, if an algorithmic problem can be formulated in
    monadic second-order logic, then it can be solved in linear time
    for graphs of bounded treewidth.
    We prove such a metatheorem for a general class of triangulations
    of arbitrary fixed dimension~$d$,
    including all triangulated $d$-manifolds:
    if an algorithmic problem can be expressed in
    monadic second-order logic, then it can be solved in linear time for
    triangulations whose dual graphs have bounded treewidth.

    We apply our results to 3-manifold topology, a setting
    with many difficult computational problems but very few parameterised
    complexity results,
    and where treewidth has practical relevance as a parameter.
    Using our metatheorem, we recover and generalise earlier
    fixed-parameter tractability results on taut angle structures
    and discrete Morse theory respectively,
    and prove a new fixed-parameter tractability
    result for computing the powerful but complex Turaev-Viro invariants
    on 3-manifolds.
\end{abstract}

\maketitle

%%%%%%%%%%%%%%%%%%%%%%%%%%%%%%%%%%%%%%%%%%%%%%%%%%%%%%%%%%%%%%%%%%%%%%%%
%
%   Introduction
%
%%%%%%%%%%%%%%%%%%%%%%%%%%%%%%%%%%%%%%%%%%%%%%%%%%%%%%%%%%%%%%%%%%%%%%%%

\section{Introduction}

Parameterised complexity is a relatively new and highly successful
framework for understanding the computational complexity of ``hard'' problems
for which we do not have a polynomial-time algorithm \cite{downey99-param}.
The key idea is to measure the complexity not just in terms of the input
size (the traditional approach), but also in terms of
additional \emph{parameters} of the input or of the problem itself.
The result is that, even if a problem is (for instance) NP-hard,
we gain a richer theoretical understanding of those classes of inputs for
which the problem is still tractable, and we acquire new practical tools
for solving the problem in real software.

For example,
finding a Hamiltonian cycle in an arbitrary graph is NP-complete,
but for graphs of fixed treewidth $\leq k$
it can be solved in linear time in the input size
\cite{downey99-param}.  % TODO: Check citation.
In general, a problem is called \emph{fixed-parameter tractable} in the
parameter $k$ if, for any class of inputs where $k$ is universally
bounded, the running time becomes polynomial in the input size.

Treewidth in particular
(which roughly measures how ``tree-like'' a graph is
\cite{robertson86-algorithmic})
is extremely useful as a parameter.
% Roughly speaking, the treewidth of a graph
% \cite{robertson86-algorithmic}
% measures how ``tree-like'' the graph is: all
% trees have treewidth $1$, and the complete graph $K_n$
% has treewidth $n-1$.
A great many graph problems are known to
be fixed-parameter tractable in the treewidth,
in a large part due to Courcelle's celebrated ``metatheorem''
\cite{courcelle87-context-free,courcelle90-rewriting}:
for \emph{any} decision problem $P$ on graphs,
if $P$ can be framed using monadic second-order logic,
then $P$ can be solved in \emph{linear time} for graphs of
universally bounded treewidth $\leq k$.

The motivation behind this paper is to develop the tools of
parameterised complexity for systematic use in the field of
geometric topology, and in particular for 3-manifold topology.
This is a field with natural and fundamental algorithmic problems,
such as determining whether two knots or two triangulations
are topologically equivalent,
and in three dimensions such problems are often decidable but
extremely complex \cite{matveev03-algms}.

Parameterised complexity is appealing as a theoretical
framework for identifying when ``hard'' topological problems can be
solved quickly.  Unlike average-case complexity or
generic complexity, it avoids the need to work with
\emph{random inputs}---something that still poses major difficulties
for 3-manifold topology \cite{dunfield06-random-covers}.
The viability of this framework is shown by recent
parameterised complexity results in topological settings such as
knot polynomials \cite{makowsky05-tutte,makowsky03-knot},
angle structures \cite{burton13-taut},
discrete Morse theory \cite{burton13-morse}, and the
enumeration of 3-manifold triangulations \cite{burton14-fpt-enum}.

The treewidth parameter plays a key role in all of the aforementioned
results.  For topological problems whose input is a triangulation $\tri$,
we measure the treewidth of the \emph{dual graph} $\dual(\tri)$,
whose nodes describe top-dimensional simplices of
$\tri$, and whose arcs show how these simplices are joined together
along their facets.
In 3-manifold topology this parameter has a natural interpretation,
and there are common settings in which the treewidth remains small; see
Section~\ref{s-app} for details.

Our main result in this paper is a Courcelle-like metatheorem
for use with triangulations.  Specifically, in Section~\ref{s-meta}
we describe a form of monadic second-order
logic for use with triangulations of fixed dimension $d$, and
we show that all problems expressible in this
logical framework are fixed-parameter tractable in the treewidth of the
dual graph of the input triangulation (Theorem~\ref{t-tri}).

Section~\ref{s-app} gives several applications of this metatheorem.
We recover earlier results on
taut angle structures \cite{burton13-taut} and discrete Morse theory
\cite{burton13-morse}, generalise the latter result
to arbitrary dimension (Theorem~\ref{t-morse}),
and prove a new result on computing the
Turaev-Viro invariants of 3-manifolds (Theorem~\ref{t-tv}).
These new results on discrete Morse theory and Turaev-Viro invariants
have significant practical potential;
see Section~\ref{s-app} for further discussion.

We prove our main result in two stages.
In Section~\ref{s-coloured} we translate several variants of
Courcelle's theorem from simple graphs to the more flexible
setting of edge-coloured graphs.
In Section~\ref{s-meta}
we show how to use an edge-coloured graph to encode the
full structure of a triangulation (using a coloured
variant of the well-known \emph{Hasse diagram}),
and using this we translate the variants of Courcelle's theorem
up to the final setting of triangulations.

We emphasise that our results
depend crucially on how we define a triangulation.
We do not allow arbitrary simplicial complexes, where low-dimensional
faces can be joined or ``pinched'' together independently of
the larger simplices to which they belong.
Instead our triangulations are formed purely by joining together
$d$-simplices along their $(d-1)$-dimensional facets.
This definition is flexible enough to encompass any reasonable
concept of a triangulated $d$-manifold.  Indeed, it covers
structures more general than simplicial complexes,
such as the highly efficient
\emph{one-vertex triangulations} \cite{jaco03-0-efficiency}
and \emph{ideal triangulations} \cite{thurston78-lectures}
favoured by many 3-manifold topologists.

%%%%%%%%%%%%%%%%%%%%%%%%%%%%%%%%%%%%%%%%%%%%%%%%%%%%%%%%%%%%%%%%%%%%%%%%
%
%   Preliminaries
%
%%%%%%%%%%%%%%%%%%%%%%%%%%%%%%%%%%%%%%%%%%%%%%%%%%%%%%%%%%%%%%%%%%%%%%%%

\newpage %XTODO

\section{Preliminaries}

\subsection{Treewidth} \label{s-tw}

Throughout this paper we work with several common classes of graphs; we briefly outline these classes here.
We use the terms \emph{node} and \emph{arc} when working with graphs to avoid confusion with the vertices and edges
of triangulations.

A \emph{simple graph} $G=(V,E)$ is a finite set $V$ of nodes and a
finite set $E$ of arcs, where each arc is an unordered pair $\{v,w\}$ of
distinct nodes $v,w \in V$ (so we do not allow loops that join a node
with itself, or multiple arcs between the same two nodes).  A
\emph{multigraph} $G=(V,E)$ is a finite set $V$ of nodes and a finite
multiset of arcs, where each arc is an unordered pair $\{v,w\}$ of nodes
$v,w \in V$ and we allow $v=w$ (so loops and multiple arcs are allowed).
An \emph{edge-coloured graph} $G=(V,E,C)$ is a finite set $V$ of nodes,
a finite set $C$ of \emph{colours} and a finite set $E$ of arcs, where
each arc is a pair $(\{v,w\},c)$ with $v,w \in V$, $c \in C$ and $v \ne
w$ (i.e., each arc is given a colour, loops are not allowed, and
multiple arcs between the same two nodes are only allowed if they are
assigned different colours).  Unless otherwise specified, any general
statement about \emph{graphs} refers to all of these classes.

For any graph $G$, the \emph{size} of the graph is denoted by $|G|$.
The size counts the total number of nodes and arcs, i.e., $|G|=|V|+|E|$.

The treewidth of a graph $G$, introduced by Robertson and Seymour
\cite{robertson86-algorithmic}, essentially measures how far $G$ is from
being a tree: any tree will have treewidth $1$ (the smallest possible),
and a complete graph will have treewidth $|V|-1$ (the largest possible
for a given $V$).  Graphs of small treewidth are often easier to work
with, as Courcelle's theorem (described below) so strikingly shows.
The full definition is as follows.

Given a simple graph or multigraph $G=(V,E)$, a \emph{tree decomposition} of
$G$ consists of a (finite) tree $T$ and \emph{bags} $B_\tau \subseteq V$ for
each node $\tau$ of $T$ that satisfy the following constraints:
\begin{itemize}
    \item each $v \in V$ belongs to some bag $B_\tau$;
    \item for each arc of $G$, its two endpoints $v,w$ belong to some
    common bag $B_\tau$;
    \item for each $v \in V$, the bags containing $v$ correspond to a
    connected subtree of $T$.
\end{itemize}
The \emph{width} of this tree decomposition is $\max |B_\tau|-1$, and the
\emph{treewidth} of $G$
is the smallest width of any tree decomposition of $G$, which
we denote by $\tw(G)$.
Figure~\ref{fig-tw} illustrates a simple graph $G$
and a corresponding tree decomposition of width~2.

\begin{figure}[tb]
    \centering
    \subfigure[A simple graph $G$\label{fig-tw-graph}]{%
        \includegraphics[scale=0.6]{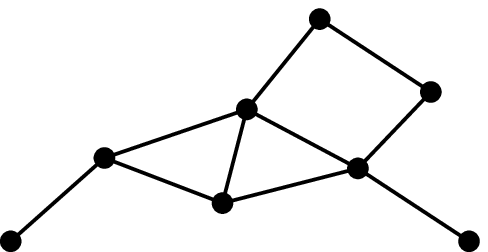}}
    \quad
    \subfigure[The bags of a tree decomposition\label{fig-tw-bags}]{%
        \qquad\includegraphics[scale=1.0]{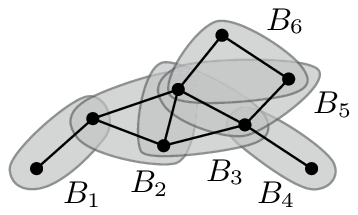}\qquad}
    \quad
    \subfigure[The underlying tree\label{fig-tw-tree}]{%
        \quad\includegraphics[scale=1.0]{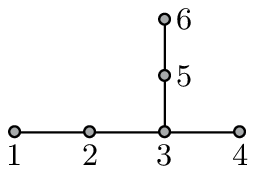}\quad}
    \caption{A simple graph and a corresponding tree decomposition}
    \label{fig-tw}
\end{figure}

Although computing treewidth is NP-complete, it is also fixed-parameter
tractable:
\begin{theorem}[Bodlaender \cite{bodlaender96-linear}]
    \label{t-tree-decomp}
    There exists a computable function $f$
    and an algorithm which, given a simple graph $G = (V,E)$ of
    treewidth $k = \tw(G)$, can compute a tree decomposition of width
    $k$ in time $f(k) \cdot |V|$.
\end{theorem}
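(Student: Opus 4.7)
The plan is to prove this by a bootstrapping and contraction strategy that reduces the graph size by a constant fraction at each recursive level. First I would observe that, for any fixed $k$, a standard dynamic-programming algorithm (e.g.\ that of Arnborg--Corneil--Proskurowski) can check whether $\tw(G) \le k$ and produce an optimal tree decomposition in time polynomial in $|V|$. The strategy for achieving linear time is to invoke such slower machinery only at the base case of a recursion, while the main body of the algorithm performs cheap local reductions that shrink the graph geometrically.

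The key structural step is to identify, in linear time, a set $I$ of $\Omega(|V|)$ ``reducible'' vertices. Since $\tw(G) \le k$ forces $|E| \le k|V|$, the average degree is at most $2k$, and so at least half of the vertices have degree at most $4k$; a greedy scan then extracts an independent set $I$ of size $\Omega(|V|)$ consisting of such low-degree vertices. One then shows that, after removing each $v \in I$ (and completing its neighbourhood into a clique in a controlled way so that $v$ becomes ``$k$-simplicial''), the resulting graph $G'$ satisfies $|V(G')| \le (1-c)|V(G)|$ and $\tw(G') \le k$. We recurse on $G'$ to obtain a tree decomposition of width $k$, then reinsert each removed vertex $v$ as a new leaf bag $\{v\} \cup N(v)$ to rebuild a tree decomposition of $G$ of width $k$. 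The recurrence $T(n) \le T((1-c)n) + g(k) \cdot n$ then solves to $f(k) \cdot n$.

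The hard part will be identifying the $k$-simplicial vertices in linear time without already having a tree decomposition at hand. The workaround, following Bodlaender's approach, is an inner bootstrap that first produces an \emph{approximate} tree decomposition of width $O(k)$---not necessarily optimal---in linear time, by applying the same contraction idea with a weaker base subroutine (historically an $O(n \log n)$ algorithm of Lagergren or Reed). Given such an approximate decomposition, the ``improved graph'' $G^{(k)}$, obtained by inserting edges between any two vertices that appear together in enough bags, exposes the $k$-simplicial vertices explicitly, so they can be located and contracted in linear time. Managing the two nested levels of recursion so that the total overhead stays within $f(k) \cdot |V|$ is the central technical difficulty. Once some decomposition of bounded width is in hand, the final refinement to the optimal width $k$ follows from the Bodlaender--Kloks dynamic-programming algorithm, which runs in time $g(k) \cdot |V|$ on inputs of bounded width.
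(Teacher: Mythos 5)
The paper does not prove this statement at all: Theorem~\ref{t-tree-decomp} is imported verbatim from Bodlaender's paper \cite{bodlaender96-linear} and used as a black box, so there is no in-paper argument to compare yours against. What you have written is a recognisable high-level outline of Bodlaender's own proof --- the recursion that shrinks the vertex set by a constant fraction per level, the use of the improved graph to expose simplicial-like vertices, and the final refinement from an approximate decomposition to width exactly $k$ via the Bodlaender--Kloks dynamic program. In that sense you are reconstructing the cited proof rather than offering an alternative route.

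As a sketch it is broadly faithful, but one step as stated is not correct without further care. You claim that from an $\Omega(|V|)$ independent set $I$ of low-degree vertices one can remove each $v \in I$, ``completing its neighbourhood into a clique in a controlled way,'' and conclude $\tw(G') \le k$. For an arbitrary low-degree vertex this fails: turning $N(v)$ into a clique can strictly increase the treewidth. Bodlaender's argument hinges on a case distinction that your sketch blurs: either a constant fraction of vertices are \emph{friendly} (low-degree with a low-degree neighbour), in which case one \emph{contracts} matching edges rather than deleting vertices, recurses, and lifts the result to an approximate decomposition of width $O(k)$; or else a constant fraction of vertices are $I$-simplicial \emph{in the improved graph} (which provably has the same treewidth as $G$ whenever $\tw(G) \le k$), and only for those is the clique-completion-and-delete step safe. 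You do eventually invoke the improved graph, and you flag the nested recursion as the central difficulty, so the outline is salvageable --- but the uniform ``remove an independent set of low-degree vertices'' reduction, as literally written, is the step that would fail, and it is precisely the point where the real proof becomes hard. Since the paper's intent is only to cite this result, none of this machinery needs to appear in the text.
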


More precisely, we have $f(k) \in 2^{k^{O(1)}}$; see
\cite{bodlaender96-linear,flum06} for details.

%%%%%%%%%%%%%%%%%%%%%%%%%%%%%%%%%%%%%%%%%%%%%%%%%%%%%%%%%%%%%%%%%%%%%%%%

\subsection{Monadic second-order logic and Courcelle's theorem}
\label{s-msol}

Monadic second-order logic, or MSO logic, is our framework for making
statements about graphs.  Here we give a brief overview of the key
concepts as they appear in the context of simple graphs;
see a standard text such as \cite{flum06} for further details.
What we describe here is sometimes called \emph{extended} MSO logic,
or $\mathit{MS}_2$ logic;
this highlights the fact that we can access arcs directly through variables
and sets, and not just indirectly through a binary relation on nodes.

MSO logic supports:
\begin{itemize}
    \item all of the standard boolean operations of
    propositional logic: $\wedge$ (and), $\vee$ (or), $\neg$ (negation),
    $\rightarrow$ (implication), and so on;

    \item variables to represent nodes, arcs, sets of nodes,
    or sets of arcs of a graph;

    \item the standard quantifiers from first-order logic:
    $\forall$ (the universal quantifier), and
    $\exists$ (the existential quantifier),
    which may be applied to any of these variable types;

    \item the binary equality relation $=$,
    which can be applied to nodes, arcs, sets of nodes, or sets of arcs;
    \item the binary inclusion relation $\in$,
    which can relate nodes to sets of nodes, or arcs to sets of arcs;
    \item the binary incidence relation $\inc(e,v)$,
    which encodes the fact that $e$ is an arc, $v$ is a node,
    and $v$ is one of the two endpoints of $e$;
    \item the binary adjacency relation $\adj(v,v')$,
    which encodes the fact that $v$ and $v'$ are the two endpoints
    of some common arc.
\end{itemize}

By convention, we use lower-case letters
$v,w,\ldots$ to represent nodes and
$e,f,\ldots$ to represent arcs,
and upper-case letters
$V,W,\ldots$ to represent sets of nodes and
$E,F,\ldots$ to represent sets of arcs.

Note that sets are simply a convenient representation of unary relations
on nodes and arcs.  A distinguishing feature of MSO logic is that
we can quantify over unary relations (i.e., we can quantify
over set variables as outlined above).

We use the notation $\phi(x_1,\ldots,x_t)$ to denote an MSO formula
with $t$ free variables (i.e., variables not bound by
$\forall$ or $\exists$ quantifiers).
An \emph{MSO sentence} is an MSO formula with no free variables at all.
If $G$ is a simple graph and $\phi$ is a MSO sentence,
we use the notation $G \models \phi$ to indicate that the interpretation
of $\phi$ in the graph $G$ is a true statement.

We give three variants of Courcelle's theorem, which relate to
algorithms for (i)~\emph{decision} problems on graphs;
(ii)~\emph{optimising} quantities on graphs; and
(iii)~\emph{evaluating} functions on graphs.
Each variant works with MSO formulae in different ways, which we
now outline in turn.

\subsubsection{Decision problems}

\begin{example}[3-colourability]
    The following MSO sentence expresses the fact that a simple graph is
    3-colourable:

    \medskip

    \begin{tabular}{l}
        $\exists V_1 \exists V_2 \exists V_3 \ \forall v \forall w$ \\
        $(v \in V_1 \vee v \in V_2 \vee v \in V_3) \wedge {}$ \\
        $\neg ((v \in V_1 \wedge v \in V_2) \vee
               (v \in V_2 \wedge v \in V_3) \vee
               (v \in V_1 \wedge v \in V_3)) \wedge {}$ \\
        $\adj(v,w) \rightarrow
            \neg ((v \in V_1 \wedge w \in V_1) \vee
                  (v \in V_2 \wedge w \in V_2) \vee
                  (v \in V_3 \wedge w \in V_3)).$
    \end{tabular}

    \medskip

    The sets $V_1,V_2,V_3$ indicate which nodes are assigned each of the
    three available colours.
    The second and third lines ensure that $V_1$, $V_2$ and $V_3$ partition
    the nodes, and the final line ensures that any two adjacent nodes
    are coloured differently.
\end{example}

\begin{theorem}[Courcelle
    \cite{courcelle87-context-free,courcelle90-rewriting}]
    Given a simple graph $G=(V,E)$, its treewidth $k=\tw(G)$ and a fixed
    MSO sentence $\phi$, there exists a computable function $f$
    and an algorithm for testing
    whether $G \models \phi$ that runs in time $f(k, |\phi|) \cdot |G|$.
\end{theorem}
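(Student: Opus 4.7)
The plan is to reduce the model-checking problem $G \models \phi$ to a membership test for a finite-state tree automaton running on a tree decomposition of $G$. First, invoke Theorem~\ref{t-tree-decomp} to compute a tree decomposition $(T, \{B_\tau\})$ of width $k = \tw(G)$ in time $f(k) \cdot |V|$. Then refine it to a \emph{nice} tree decomposition of width $k$ with $O(k \cdot |V|)$ nodes of four canonical types (leaf, introduce-node, introduce-arc, forget-node, and join), at only a constant-in-$k$ overhead that can be absorbed into $f$. This lets us process $G$ by a single bottom-up sweep that adds one node or one arc at a time and occasionally merges two subproblems.

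Next, I would construct a deterministic bottom-up tree automaton $\mathcal{A}_\phi$ whose states encode, for the subgraph $G_\tau$ assembled from all bags in the subtree at $\tau$, the \emph{MSO-type of quantifier rank $q$} of the structure $(G_\tau, B_\tau)$, where the at most $k+1$ nodes of the bag $B_\tau$ are treated as distinguished constants. Here $q$ is bounded by $|\phi|$. The crucial ingredient is a Feferman--Vaught-style composition lemma for MSO: the rank-$q$ type of a structure obtained by introducing a node, introducing an arc between two distinguished nodes, forgetting a distinguished node, or taking a disjoint union identified along the distinguished nodes is a computable function of the rank-$q$ types of the constituents. Since the number of rank-$q$ MSO types over a signature with at most $k+1$ distinguished points is bounded by some function $h(q, k)$, the state set of $\mathcal{A}_\phi$ is finite of size $h(|\phi|, k)$, and the transition function has size bounded by $f(k, |\phi|)$ and can be precomputed. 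At the root, we accept iff the type contains $\phi$.

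Running $\mathcal{A}_\phi$ is a postorder traversal of $T$ with $O(|V|)$ transitions, each a table lookup of cost $f(k, |\phi|)$, yielding total time $f(k, |\phi|) \cdot |G|$, as required. The main obstacle is establishing the compositionality of MSO types: one needs the Ehrenfeucht--Fra\"\i ss\'e / Shelah argument that rank-$q$ MSO types of a composite structure are determined by those of the pieces together with the ``glueing data,'' and one needs to verify that all incidence and adjacency atomic formulas that the formal definition in Section~\ref{s-msol} admits are captured locally at the step in the nice tree decomposition where the relevant arc is introduced. Once both are in hand, all the remaining steps are routine bookkeeping, and the same scheme will generalise smoothly to the optimisation, evaluation, and edge-coloured variants needed in Section~\ref{s-coloured}.
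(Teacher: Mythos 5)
The paper does not prove this statement at all: it is imported verbatim from Courcelle's work \cite{courcelle87-context-free,courcelle90-rewriting} and used as a black box, so there is no internal proof to compare yours against. Your sketch is the standard automata-theoretic proof found in the literature (compute a width-$k$ tree decomposition via Theorem~\ref{t-tree-decomp}, make it nice, and run a finite-state bottom-up automaton whose states are bounded-rank MSO types of the partially assembled graph with the current bag as distinguished constants), and as a roadmap it is correct. Two remarks. First, you have correctly identified where the real mathematical content lives --- the Feferman--Vaught/Ehrenfeucht--Fra\"\i ss\'e composition lemma showing that the rank-$q$ type of a glued structure is a computable function of the types of the pieces, plus the finiteness of the set of rank-$q$ types --- but you defer it rather than prove it, so what you have is an outline of Courcelle's proof rather than a proof. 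Second, since the paper works with extended ($\mathit{MS}_2$) logic, where arcs are first-class elements that can be quantified over individually and in sets, your automaton must run over the two-sorted incidence structure rather than the adjacency structure; your ``introduce-arc'' nodes and your closing caveat about capturing incidence atoms locally show you are aware of this, but it is precisely the point at which a careless execution would prove only the $\mathit{MS}_1$ version. Your closing claim that the same scheme extends to the optimisation and evaluation variants is consistent with how the literature actually obtains Theorems~\ref{t-courcelle-opt} and~\ref{t-courcelle-eval} (one augments the automaton states with extremal values or partial sums/products), though the paper itself again simply cites those extensions rather than deriving them.
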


\begin{corollary} \label{c-courcelle-test}
    For any fixed MSO sentence $\phi$ and
    any class $K$ of simple graphs with universally bounded treewidth,
    it is possible to test whether $G \models \phi$
    for graphs $G \in K$ in time $O(|G|)$.
\end{corollary}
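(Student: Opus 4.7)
The plan is to derive the corollary as a direct specialisation of Courcelle's theorem (applied in conjunction with Bodlaender's theorem on computing tree decompositions). The key observation is that, once we fix the sentence $\phi$ and impose a universal bound on $\tw(G)$ for $G \in K$, every input-independent quantity in the running time of Courcelle's theorem becomes a constant and is absorbed into the $O(|G|)$ hidden constant.

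More concretely, let $k_0$ denote a uniform bound on $\tw(G)$ for all $G \in K$. Given any input $G \in K$, I would first invoke Theorem~\ref{t-tree-decomp} (Bodlaender) to compute a tree decomposition of $G$ of optimal width $k = \tw(G) \le k_0$, which takes time $f(k) \cdot |V| \le f(k_0) \cdot |V|$ for the fixed function $f$ from Bodlaender's theorem. Since $k_0$ is fixed, $f(k_0)$ is a constant and this step runs in time $O(|V|) = O(|G|)$. I would then hand the resulting decomposition and $G$ to the algorithm of Courcelle's theorem, which tests $G \models \phi$ in time $g(k, |\phi|) \cdot |G|$ for some computable $g$. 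Because both $k \le k_0$ and $\phi$ are fixed, the prefactor $g(k,|\phi|)$ is bounded by the constant $g(k_0, |\phi|)$, so this step also runs in time $O(|G|)$.

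Combining the two stages yields total running time $O(|G|)$, as required. There is essentially no obstacle here: the corollary is a purely notational reformulation that trades the explicit dependence $f(k,|\phi|)$ for an $O(\cdot)$ bound once the parameters are held constant. The only minor subtlety is that Theorem~\ref{t-tree-decomp} requires a \emph{simple graph}, which matches the hypothesis of the corollary, and that Courcelle's theorem as stated already assumes knowledge of the treewidth $k = \tw(G)$; invoking Bodlaender lets us assume this without loss of generality within the claimed time bound.
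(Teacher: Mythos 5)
Your proposal is correct and matches the paper's (implicit) argument exactly: the corollary is stated as an immediate consequence of Courcelle's theorem, with the paper's remark that Theorem~\ref{t-tree-decomp} removes the need to supply a tree decomposition in advance, and the fixed $\phi$ and bounded treewidth turn the prefactor $f(k,|\phi|)$ into a constant. Nothing further is needed.
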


In other words, testing for $\phi$ is linear-time fixed-parameter tractable
in the treewidth.
Note that, thanks to Theorem~\ref{t-tree-decomp},
we do not need to supply an explicit tree decomposition of the input graph $G$
in advance.

For the remainder of this paper we will formulate our results using the
language of Corollary~\ref{c-courcelle-test}, omitting explicit
references to the underlying function $f$.

\subsubsection{Optimisation problems}
\label{s-extremum}

A \emph{restricted MSO extremum problem} consists of
an MSO formula $\phi(A_1,\ldots,A_t)$
with free set variables $A_1,\ldots,A_t$,
and a rational linear function $g(x_1,\ldots,x_t)$.  Its interpretation
is as follows: given a simple graph $G$ as input,
we are asked to minimise $g(|A_1|,\ldots,|A_t|)$ over all sets
$A_1,\ldots,A_t$ for which $G \models \phi(A_1,\ldots,A_t)$,
where $|A_i|$ as usual denotes the number of objects in the set $A_i$.

\begin{example}[Dominating set]
    The well-known problem \emph{dominating set}
    asks for the smallest set of nodes $D$ in a given graph $G$ for which
    every node in $G$ is either in $D$ or adjacent to some node in $D$.

    To formulate this as a restricted MSO extremum problem,
    we use a single free set variable $D$,
    and minimise the linear function
    $g(|D|) = |D|$ under the following MSO constraint:

    \medskip

    \begin{tabular}{l}
        $\forall v\ \exists w\ (v \in D) \vee (w \in D \wedge \adj(v,w)).$
    \end{tabular}
\end{example}

Courcelle's theorem has been extended to work with such problems:

% Theorem 5.6 of Arnborg et al:
\begin{theorem}[Arnborg, Lagergren and Seese \cite{arnborg91-easy}]
    \label{t-courcelle-opt}
    For any restricted MSO extremum problem $P$ and
    any class $K$ of simple graphs with universally bounded treewidth,
    it is possible to solve $P$ for graphs $G \in K$ in time $O(|G|)$
    under the uniform cost measure.
\end{theorem}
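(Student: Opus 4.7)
The plan is to combine the decision version of Courcelle's theorem with a weighted bottom-up dynamic programming scheme on a tree decomposition. First, use Theorem~\ref{t-tree-decomp} to compute a tree decomposition of $G$ of width $k$ in linear time, and convert it in linear time into a \emph{nice} tree decomposition whose nodes are only of introduce, forget, or join type, so that the subgraph associated with each node is built from those of its children by small, well-understood operations.

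At each node $\tau$ of the decomposition, consider partial solutions consisting of a restriction of the free set variables $A_1,\ldots,A_t$ to $B_\tau$, together with an extension of these sets to the subgraph processed so far. Call two such partial solutions equivalent if they induce the same restriction on $B_\tau$ and the same MSO type up to the quantifier rank of $\phi$ on the processed subgraph. A Feferman--Vaught-style composition theorem for MSO guarantees that there are only finitely many such equivalence classes, with a bound $f(k,|\phi|)$ depending only on $k$ and $|\phi|$, and that the class at $\tau$ is determined by the classes at its children together with the local operation performed at $\tau$. Maintain at each node a table indexed by these signatures whose entries record the minimum value of $g(|A_1|,\ldots,|A_t|)$ attained by any partial solution realising the given signature. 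Because $g$ is a rational linear function, its value decomposes as a sum of local contributions made whenever an element enters some $A_i$; hence at an introduce node the entries are updated by adding a constant depending only on the local restriction, and at forget and join nodes they are updated by minimising over preimages under the corresponding automaton transition. After processing the root, the answer is the minimum entry over signatures compatible with $\phi$.

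The main obstacle is establishing compositionality of MSO types across introduce/forget/join operations and bounding the number of signatures uniformly in $|G|$; this is precisely the content of the tree-automaton interpretation used to prove Courcelle's theorem itself, so no new logical machinery is needed beyond what already underlies the decision version. Once this compositionality is in hand, the weighted extension is essentially automatic: each of the $O(|G|)$ nodes of the nice tree decomposition requires only a number of table updates bounded by $f(k,|\phi|)$, each involving constant-time arithmetic under the uniform cost measure, yielding the claimed $O(|G|)$ running time for every fixed $\phi$, $g$, and treewidth bound~$k$.
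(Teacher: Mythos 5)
The paper does not prove this theorem at all: it is imported as a black box from Arnborg, Lagergren and Seese \cite{arnborg91-easy}, so there is no internal proof to compare yours against. Your sketch is a reasonable reconstruction of the standard argument that underlies the cited result: Bodlaender's algorithm (Theorem~\ref{t-tree-decomp}) to obtain a width-$k$ decomposition, conversion to a nice decomposition, and a bottom-up dynamic program whose states are bounded-index MSO types augmented with the trace of the free set variables on the current bag, with each table entry carrying the optimum of the linear objective over partial solutions realising that state. The key points are all present and correctly placed: compositionality of types across introduce/forget/join is exactly the tree-automaton content of the decision version, and the linearity of $g$ is what lets the objective decompose into per-element contributions so that minima combine correctly across independent subtrees for a fixed tuple of child signatures. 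Two small things you should flag if you were to flesh this out. First, the paper works with extended ($\mathit{MS}_2$) logic, where the free sets $A_i$ may contain arcs as well as nodes, so the decomposition and the bag-trace bookkeeping must account for arc elements (e.g.\ via the incidence structure); this is routine but not automatic from the node-only picture you describe. Second, you should say what happens when no assignment satisfies $\phi$ (the table propagates an ``infeasible'' marker). Neither issue is a real gap, and your route is essentially the one taken in \cite{arnborg91-easy} (and, in semiring form, by Courcelle and Mosbah), not a different one.
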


Recall that the \emph{uniform cost measure}
assumes that elementary arithmetic operations run in constant time;
see a standard text such as \cite{aho75} for details.
Again, Theorem~\ref{t-tree-decomp} ensures that
we do not need to supply an explicit tree decomposition in advance.

Arnborg et~al.\ \cite{arnborg91-easy} prove more general results:
for instance, they allow evaluation functions on weighted graphs
(where integer or rational weights on the nodes and/or arcs are supplied
with the problem instance), they allow additional constants to be given
with the problem instance, and they discuss non-linear extremum problems
and enumeration problems.
For simplicity we restrict our attention here to the restricted class
of extremum problems as described above.
%Theorem~\ref{t-courcelle-opt} as presented above is a special case of
%\cite[Theorem~5.6]{arnborg91-easy}.

\subsubsection{Evaluation problems}
\label{s-evaluation}

We move now to evaluation problems, which
are generalised counting problems: essentially,
we assign a value to each solution to some MSO-defined
problem on a graph, and then sum these values over all solutions.
Evaluation problems come in both additive or multiplicative variants,
and counting problems correspond to multiplicative variants in which
all solutions have value~$1$.

More precisely, an \emph{MSO evaluation problem} is defined as follows.
The problem consists of an MSO formula $\phi(A_1,\ldots,A_t)$
with $t$ free set variables $A_1,\ldots,A_t$.
The input to the problem is a simple graph $G=(V,E)$,
together with $t$ weight functions
$w_1,\ldots,w_t \co V \sqcup E \to R$ on nodes and/or arcs,
where $R$ is some ring or field.
The problem then asks us to compute one of the quantities
\[ \sum_{G \models \phi(A_1,\ldots,A_t)}
    \left\{
    \sum_{i=1}^t \sum_{x_i \in A_i} w_i(x_i)
    \right\}
    \quad\mbox{or}\quad
   \sum_{G \models \phi(A_1,\ldots,A_t)}
    \left\{
    \prod_{i=1}^t \prod_{x_i \in A_i} w_i(x_i)
    \right\}; \]
we refer to these two variants as \emph{additive} and
\emph{multiplicative} evaluation problems respectively.
For both problems, the outermost sum is over all solutions
$A_1,\ldots,A_t$ that satisfy the MSO formula $\phi$ on the graph $G$.

We now come to our third variant of Courcelle's theorem, which extends
earlier work of Courcelle and Mosbah \cite{courcelle93-monadic}
and Arnborg et~al.\ \cite{arnborg91-easy}.

% Theorem 32 of Courcelle, Makowsky and Rotics:
\begin{theorem}[Courcelle, Makowsky and Rotics \cite{courcelle01-enumeration}]
    \label{t-courcelle-eval}
    For any MSO evaluation problem $P$ and
    any class $K$ of simple graphs with universally bounded treewidth,
    it is possible to solve $P$ for graphs $G \in K$ in time $O(|G|)$
    under the uniform cost measure.
\end{theorem}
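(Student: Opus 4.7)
The plan is to prove this by a dynamic-programming argument on a tree decomposition, extending the automata-theoretic viewpoint that underlies the decision version of Courcelle's theorem. First I would apply Theorem~\ref{t-tree-decomp} to compute, in time $O(|V|)$, a tree decomposition of $G$ of width $k = \tw(G)$, and then convert it in linear time into a \emph{nice} tree decomposition (with leaf, introduce, forget, and join nodes) without increasing the width or changing its size up to a constant factor.

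Next I would invoke the standard automata-theoretic fact underlying Courcelle's theorem: for any fixed MSO formula $\phi(A_1,\ldots,A_t)$ and any fixed width bound $k$, one can compile $\phi$ into a finite bottom-up tree automaton $\mathcal{A}_\phi$ that runs on nice tree decompositions whose bag elements are additionally annotated by their membership in each $A_i$. The states of $\mathcal{A}_\phi$ encode the MSO type (up to the quantifier depth of $\phi$) of the subgraph processed so far, so that $\mathcal{A}_\phi$ accepts exactly those annotated decompositions for which $G \models \phi(A_1,\ldots,A_t)$. Crucially, the number of states depends only on $\phi$ and $k$, not on $|G|$.

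Given $\mathcal{A}_\phi$, I would run the following DP. At each node $\tau$ of the nice tree decomposition, for each automaton state $q$ and each choice of how the variables $A_1,\ldots,A_t$ intersect the bag $B_\tau$, I would store a value $S_\tau(q, A_1 \cap B_\tau, \ldots, A_t \cap B_\tau) \in R$. In the multiplicative case, this value is the sum, over all extensions of $A_1,\ldots,A_t$ to the elements already forgotten below $\tau$ that drive $\mathcal{A}_\phi$ into state $q$ and agree with the specified bag restrictions, of the accumulated partial product $\prod_i \prod_{x \in A_i \text{ forgotten}} w_i(x)$. In the additive case one stores a pair (partial sum, number of extensions) and updates them together, which suffices because the outer sum distributes over the inner sum. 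Forget nodes commit a newly discarded element's contribution; introduce nodes extend the bag restriction; join nodes combine the two children's tables by multiplying entries whose bag restrictions agree and whose automaton states yield a legal product in $\mathcal{A}_\phi$, in the Courcelle-Mosbah manner. Each table has size bounded by a function of $k$, $t$ and $|\phi|$, so each transition uses $O(1)$ ring operations, and the total cost is $O(|V|)$ under the uniform cost measure. Summing $S_{\mathrm{root}}(q, \emptyset, \ldots, \emptyset)$ over all accepting $q$ gives the answer.

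The main obstacle is showing that the MSO-to-automaton compilation carries enough structural information to assemble the weighted sum or product coherently across join nodes, and not merely to decide $\phi$. Courcelle and Mosbah \cite{courcelle93-monadic} handled the multiplicative case over commutative semirings, and Arnborg, Lagergren and Seese \cite{arnborg91-easy} handled linear extremum problems; the work of Courcelle, Makowsky and Rotics unifies these into a single DP framework covering all MSO evaluation problems as defined above. The delicate point, which is the heart of their proof, is the Feferman--Vaught--style composition argument showing that the MSO type of a subgraph, refined by the bag restrictions of $A_1,\ldots,A_t$, is fine enough for the join-node combination step to produce the correct aggregated value in $R$.
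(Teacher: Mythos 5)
The paper does not prove this statement: Theorem~\ref{t-courcelle-eval} is imported as a black box from Courcelle, Makowsky and Rotics \cite{courcelle01-enumeration}, and the only accompanying remark is that the cited work treats $t=1$ free set variable explicitly, with the extension to fixed $t>1$ noted as routine. Your sketch is a faithful outline of how the proof goes in that literature --- nice tree decomposition via Bodlaender, MSO-to-tree-automaton compilation, and a table indexed by automaton state and bag restrictions of $A_1,\ldots,A_t$ carrying aggregated ring values, with the $(\text{sum},\text{count})$ pairing for the additive case and the Feferman--Vaught composition at join nodes correctly identified as the crux --- so there is no conflict with anything in the paper, only a caveat that your argument defers its two hardest steps (the automaton compilation and the compositionality of types refined by the set annotations) to the very references being cited, and glosses over the $\mathit{MS}_2$ detail that arcs, not just nodes, must be assigned to bags and forgotten exactly once so that arc weights are committed coherently.
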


Here we interpret the uniform cost measure to allow constant-time
arithmetic operations over the ring or field $R$.
Courcelle et~al.\ \cite{courcelle01-enumeration}
only prove this explicitly for $t=1$ free variable only;
however, they note that the generalisation to a sequence of
$t$ free variables (for fixed $t$) is obvious.

%%%%%%%%%%%%%%%%%%%%%%%%%%%%%%%%%%%%%%%%%%%%%%%%%%%%%%%%%%%%%%%%%%%%%%%%

\subsection{Triangulations}
\label{s-prelim-tri}

We now describe the general class of $d$-dimensional triangulations
upon which our metatheorem operates.
In essence, these triangulations are formed by identifying (or ``gluing'')
facets of $d$-simplices in pairs.
This definition does not cover all simplicial complexes (in which
lower-dimensional faces can also be identified independently), but it does
encompass any reasonable definition of a triangulated $d$-manifold;
moreover, it allows more general structures that simplicial complexes
do not, such as the highly efficient
``1-vertex triangulations'' and ``ideal triangulations''
that are often found in algorithmic 3-manifold topology
\cite{jaco03-0-efficiency,thurston78-lectures}.
The details follow.

Let $d \in \N$.  A \emph{$d$-dimensional triangulation}
consists of a collection of abstract $d$-simplices $\Delta_1,\ldots,\Delta_n$,
some or all of whose facets\footnote{%
    Recall that a \emph{facet} of a $d$-simplex is a $(d-1)$-dimensional face.}
are affinely identified (or ``glued'') in pairs.
Each facet $F$ of a $d$-simplex may only be identified with at most one
other facet $F'$ of a $d$-simplex; this may be another facet
of the \emph{same} $d$-simplex, but it cannot be $F$ itself.
Those facets that are not identified with any other facet together form
the \emph{boundary} of the triangulation.

Consider any integer $i$ with $0 \leq i < d$.
There are $\binom{d+1}{i+1}$ distinct $i$-faces of each simplex
$\Delta_1,\ldots,\Delta_n$.
As a consequence of the facet identifications, some of these $i$-faces
become identified with each other; we refer to each class of
identified $i$-faces as a single \emph{$i$-face of the triangulation}.
As usual, 0-faces and 1-faces are called \emph{vertices} and
\emph{edges} respectively.
A \emph{simplex of the triangulation} explicitly refers to one of the
$d$-simplices $\Delta_1,\ldots,\Delta_n$ (not a smaller-dimensional face),
and for convenience we also refer to these as
\emph{$d$-faces of the triangulation}.

A \emph{$d$-manifold triangulation} is simply a $d$-dimensional triangulation
whose underlying topological space is a $d$-manifold when using the
quotient topology.

By convention, we label the vertices of each simplex as $0,\ldots,d$.
We also arbitrarily label the vertices of each $i$-face of the
triangulation as $0,\ldots,i$ (so, for instance, for $i=1$ this corresponds
to placing an arbitrary direction on each edge).
Note that there are many possible ways in which the
vertex labels of an $i$-face of the triangulation
might correspond to vertex labels on the constituent simplices.

\begin{figure}[tb]
    \subfigure[A Klein bottle $\mathcal{K}$\label{fig-kb}]{%
        \includegraphics[scale=0.9]{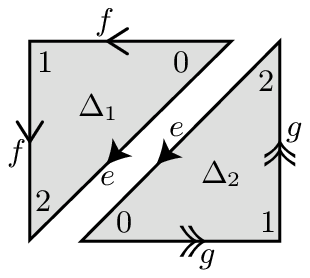}}
    \quad
    \subfigure[A one-tetrahedron solid torus\label{fig-lst}]{%
        \quad\qquad\includegraphics[scale=0.9]{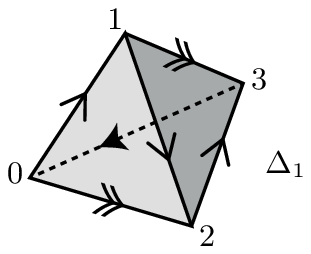}\qquad\quad}
    \quad
    % TODO: Shift Delta_2 slightly to the left.
    \subfigure[The dual graph $\dual(\mathcal{K})$\label{fig-dual}]{%
        \qquad\includegraphics[scale=0.9]{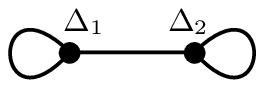}\qquad}
    \caption{Examples of triangulations}
\end{figure}

\begin{example} \label{ex-tri}
    Figure~\ref{fig-kb} illustrates a 2-manifold triangulation with
    $n=2$ simplices whose
    underlying topological space is a Klein bottle.  As indicated by the
    arrowheads, we identify the following pairs of facets (i.e., edges):
    \[
    \Delta_1\of02 \longleftrightarrow \Delta_2\of20,\quad
    \Delta_1\of01 \longleftrightarrow \Delta_1\of12,\quad
    \Delta_2\of01 \longleftrightarrow \Delta_2\of12. \]
    The resulting triangulation has one vertex (since
    all three vertices of $\Delta_1$ and all three vertices of
    $\Delta_2$ become identified together),
    and three edges (labelled $e,f,g$ in the diagram).

    If we label each edge so that vertices $0$ and $1$ are at the
    base and the tip of the arrow respectively, then vertex $0$ of edge
    $e$ corresponds to the individual triangle vertices $\Delta_1\of0$
    and $\Delta_2\of2$, and vertex $1$ of edge $e$ corresponds to
    $\Delta_1\of2$ and $\Delta_2\of0$.

    Figure~\ref{fig-lst} illustrates a 3-manifold triangulation
    with just $n=1$ simplex whose underlying topological space is the
    solid torus $B^2 \times S^1$ \cite{jaco03-0-efficiency}.
    % TODO: Check that Jaco'03 actually shows LST(1,2,3).
    Here we identify facets $\Delta_1\of012 \longleftrightarrow \Delta_1\of123$,
    and leave the other two facets as boundary.
    The resulting triangulation has just
    one vertex, three edges, and three 2-faces.
\end{example}

Let $\tri$ be a $d$-dimensional triangulation.
The \emph{size} of $\tri$, denoted $|\tri|$, is the number of
simplices (i.e., $d$-faces) in $\tri$.
Note that the total number of faces
of \emph{any} dimension is at most $2^{d+1}|\tri|$, and is hence
linear in $|\tri|$ for fixed dimension $d$.

The \emph{dual graph} of $\tri$, denoted $\dual(\tri)$,
is the multigraph whose nodes correspond to simplices
and whose arcs correspond to identified pairs of facets.
In particular, $\dual(\tri)$ has precisely $|\tri|$ nodes,
and each node has degree at most $d+1$.
Loops may occur in $\dual(\tri)$ if two facets of the same simplex are
identified, and parallel arcs may occur if different
facets of some simplex $\Delta_i$ are identified with (different)
facets of the same
simplex $\Delta_j$.  Figure~\ref{fig-dual} illustrates the dual graph of
the Klein bottle from Example~\ref{ex-tri}.

%%%%%%%%%%%%%%%%%%%%%%%%%%%%%%%%%%%%%%%%%%%%%%%%%%%%%%%%%%%%%%%%%%%%%%%%
%
%   Coloured graphs
%
%%%%%%%%%%%%%%%%%%%%%%%%%%%%%%%%%%%%%%%%%%%%%%%%%%%%%%%%%%%%%%%%%%%%%%%%

\section{Edge-coloured graphs} \label{s-coloured}

Here we prove that the three variants of Courcelle's theorem on simple
graphs (Corollary~\ref{c-courcelle-test},
Theorem~\ref{t-courcelle-opt} and Theorem~\ref{t-courcelle-eval})
also hold for edge-coloured graphs with a fixed number of colours.
Although the results here are unsurprising and draw on standard
techniques, they are necessary as a stepping stone for Section~\ref{s-meta},
where we use edge-coloured graphs to encode
the full structure of a triangulation.

%%%%%%%%%%%%%%%%%%%%%%%%%%%%%%%%%%%%%%%%%%%%%%%%%%%%%%%%%%%%%%%%%%%%%%%%

\subsection{MSO logic on edge-coloured graphs}

For any fixed number of colours $k \in \N$, we can extend
MSO logic to the setting of edge-coloured graphs as follows.
We allow all of the constructs of MSO logic on simple graphs, as
outlined in Section~\ref{s-msol}.  In additional, we support:
\begin{itemize}
    \item $k$ unary colour relations
    $\col_1(e), \ldots, \col_k(e)$,
    where $\col_i(e)$ encodes the fact that $e$ is an arc of the
    $i$th colour;
    \item $k$ binary adjacency relations
    $\adj_1(v,v'), \ldots, \adj_k(v,v')$,
    where $\adj_i(v,v')$ encodes the fact that $v$ and $v'$ are the two
    endpoints of some common arc of the $i$th colour.
\end{itemize}
Note that the relations $\adj_i$ are for convenience only, and can
easily be encoded using the other constructs available.

As usual, if $G=(V,E,C)$ is an edge-coloured graph with $|C|=k$ colours,
and $\phi$ is an MSO sentence using the additional constructs outlined
above, the notation $G \models \phi$ indicates that the interpretation
of $\phi$ in the graph $G$ is a true statement.

We define extremum and evaluation problems exactly as before:
a restricted MSO extremum problem minimises a rational function over
sizes of set variables as in Section~\ref{s-extremum}, and
an MSO evaluation problem computes an additive or multiplicative quantity
using weights on the nodes and/or arcs as in Section~\ref{s-evaluation}.

%%%%%%%%%%%%%%%%%%%%%%%%%%%%%%%%%%%%%%%%%%%%%%%%%%%%%%%%%%%%%%%%%%%%%%%%

\subsection{Metatheorems on edge-coloured graphs}
\label{s-col-meta}

Here we translate Courcelle's theorem and its variants to edge-coloured
graphs.  The basic idea is, for any edge-coloured graph $G$,
to construct an associated simple graph $\overline{G}$ for which:
\begin{itemize}
\item the size $|\overline{G}|$ is linear in $|G|$;
\item the treewidth $\tw(\overline{G})$ is at worst linear in $\tw(G)$; and
\item MSO formulae on $G$ translate to MSO formulae on
$\overline{G}$.
\end{itemize}
The coloured variants of Courcelle's theorem then fall out naturally,
as seen in Theorem~\ref{t-courcelle-coloured}.
The details follow.

\begin{construction} \label{const-simple}
    Let $G=(V,E,C)$ be any edge-coloured graph, with colours
    $C=\{c_1,\ldots,c_k\}$.
    We construct the associated simple graph $\overline{G}$ as follows:
    \begin{itemize}
        \item For each colour $c_i$ we add $i+2$ nodes
        $\kappa_{i,1},\ldots,\kappa_{i,i+2}$ to $\overline{G}$
        plus all $\binom{i+2}{2}$ possible arcs between them.
        In other words, we insert $k$ cliques of sizes $3,\ldots,k+2$.

        \item For each node $v \in V$ we add a corresponding node
        $\overline{v}$ to $\overline{G}$.

        \item For each arc $e = \{(v,w),c) \in E$ we add
        a corresponding node $\overline{e}$ to $\overline{G}$,
        plus three arcs $\{\overline{e},\overline{v}\}$,
        $\{\overline{e},\overline{w}\}$, and
        $\{\overline{e},\kappa_{c,1}\}$.
    \end{itemize}
\end{construction}

\begin{figure}[tb]
    \subfigure[The edge-coloured graph $G$\label{fig-simple-graph}]{%
        \qquad\includegraphics[scale=0.9]{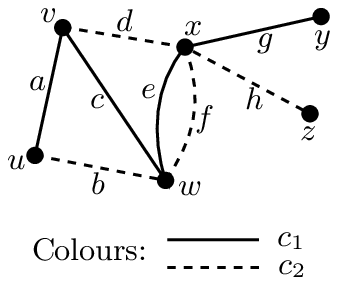}\qquad}
    \qquad
    \subfigure[The associated simple graph $\overline{G}$\label{fig-simple-plain}]{%
        \includegraphics[scale=0.9]{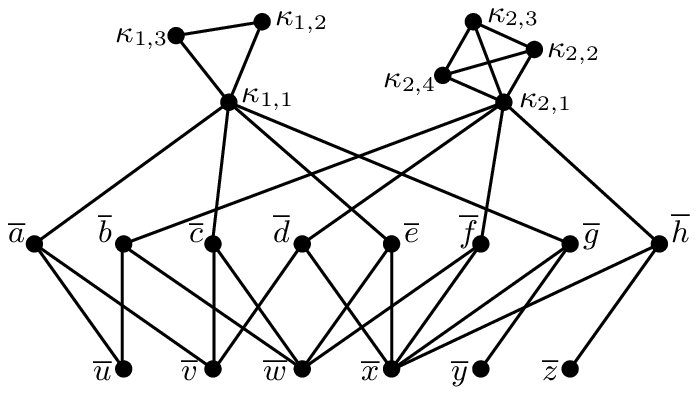}}
    \caption{Converting an edge-coloured graph into a simple graph}
    \label{fig-simple}
\end{figure}

Figure~\ref{fig-simple} illustrates this construction.
It is immediate from the definition of an edge-coloured graph (see
Section~\ref{s-tw}) that $\overline{G}$ is indeed a simple graph as claimed.

\begin{lemma} \label{l-simple-size}
    For any edge-coloured graph $G=(V,E,C)$ with $|C|=k$ colours, we have
    $|\overline{G}| = |V|+4|E|+\binom{k+4}{3} - 4$.
\end{lemma}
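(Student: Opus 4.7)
The plan is a direct counting argument: enumerate the nodes and arcs contributed by each of the three bullet points in Construction~\ref{const-simple}, then collapse the resulting sums via standard binomial identities.

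First I would count the nodes of $\overline{G}$. The colour gadgets for $c_1,\ldots,c_k$ contribute $\sum_{i=1}^{k}(i+2)$ nodes; the original vertex set contributes $|V|$ nodes $\overline{v}$; and the arc-subdivision step contributes one node $\overline{e}$ per arc, for $|E|$ more. Next I would count the arcs of $\overline{G}$. The $i$th clique contributes $\binom{i+2}{2}$ arcs, and each $e \in E$ contributes exactly three arcs ($\overline{e}$ joined to $\overline{v}$, $\overline{w}$, and $\kappa_{c,1}$); the original $\overline{v}$ nodes contribute none on their own. Adding nodes and arcs and separating out the $|V|$ and $|E|$ terms yields
\[
|\overline{G}| \;=\; |V| + 4|E| + \sum_{i=1}^{k}\left[(i+2) + \binom{i+2}{2}\right].
\]

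Now I would simplify the remaining sum. By Pascal's rule,
\[
(i+2) + \binom{i+2}{2} \;=\; \binom{i+2}{1} + \binom{i+2}{2} \;=\; \binom{i+3}{2},
\]
and then the hockey-stick identity gives
\[
\sum_{i=1}^{k}\binom{i+3}{2} \;=\; \sum_{j=4}^{k+3}\binom{j}{2} \;=\; \binom{k+4}{3} - \binom{4}{3} \;=\; \binom{k+4}{3} - 4,
\]
which yields exactly the claimed formula.

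There is no genuine obstacle here: the only mild subtlety is ensuring the indexing is correct when applying the hockey-stick identity (the sum runs from $j=4$ to $j=k+3$, hence the subtraction of $\binom{4}{3}=4$ rather than of $1$). The statement is essentially a sanity check that the construction blows the graph up by only a linear factor in $|G|$ plus a constant (in $k$) overhead, which is what Section~\ref{s-col-meta} needs for the promised comparison of sizes and treewidths between $G$ and $\overline{G}$.
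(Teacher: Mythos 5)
Your proof is correct and takes essentially the same approach as the paper: a direct count of the nodes and arcs contributed by each part of Construction~\ref{const-simple}, followed by routine binomial simplification. The only (immaterial) difference is that the paper evaluates the node and arc counts separately in closed form and then applies $\binom{k+3}{2}+\binom{k+3}{3}=\binom{k+4}{3}$, whereas you combine the per-colour contributions first via Pascal's rule and the hockey-stick identity; both routes land on the same formula.
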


\begin{proof}
    It is clear by construction
    that the number of nodes of $\overline{G}$ is
    $|V|+|E|+\sum_{i=3}^{k+2} i =
    |V|+|E|+ \binom{k+3}{2}-3$,
    and the number of arcs of $\overline{G}$ is
    $3|E|+\sum_{i=3}^{k+2} \binom{i}{2} =
    3|E|+\binom{k+3}{3} - 1$.
    The result then follows from the identity
    $\binom{k+3}{2}+\binom{k+3}{3} = \binom{k+4}{3}$.
\end{proof}

\begin{lemma} \label{l-simple-tw}
    For any edge-coloured graph $G=(V,E,C)$ with $|C|=k$ colours, we have
    $\tw(\overline{G}) \leq \tw(G) + \binom{k+3}{2} - 1$.
\end{lemma}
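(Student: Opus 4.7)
The plan is to lift an optimal tree decomposition of $G$ (viewed as a multigraph by forgetting colours) to one of $\overline{G}$ via two easy modifications: (i) insert every clique gadget node into every existing bag, and (ii) graft a small leaf bag for each arc of $G$ to house the subdivision node $\overline{e}$. Let $(T, \{B_\tau\}_{\tau \in T})$ be a tree decomposition of $G$ of width exactly $\tw(G)$, so $|B_\tau| \leq \tw(G)+1$ for every $\tau$. Write $K = \{\kappa_{i,j} : 1 \leq i \leq k,\ 1 \leq j \leq i+2\}$ for the set of all clique gadget nodes introduced by Construction~\ref{const-simple}; the count in Lemma~\ref{l-simple-size} gives $|K| = \sum_{i=1}^{k}(i+2) = \binom{k+3}{2}-3$.

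Concretely, for each $\tau \in T$ I set $\overline{B}_\tau = \{\overline{v} : v \in B_\tau\} \cup K$, keeping the tree structure of $T$. Then for every arc $e = (\{v,w\},c) \in E$ I choose a node $\tau_e \in T$ with $\{v,w\} \subseteq B_{\tau_e}$ (which exists by the arc-covering axiom applied to $T$) and graft a new leaf $\tau'_e$ onto $\tau_e$ whose bag is $\overline{B}_{\tau'_e} = \{\overline{v}, \overline{w}, \kappa_{c,1}, \overline{e}\}$. This yields a tree $\overline{T}$ together with an assignment of bags in $\overline{G}$.

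To establish the lemma I would then verify the three tree decomposition axioms: every node of $\overline{G}$ lies in some bag (clique nodes in every $\overline{B}_\tau$; each $\overline{v}$ in $\overline{B}_\tau$ whenever $v \in B_\tau$; each $\overline{e}$ in its own grafted leaf); every arc of $\overline{G}$ has both endpoints in a common bag (clique arcs are confined to $K \subseteq \overline{B}_\tau$, while the three arcs incident to $\overline{e}$ all sit inside $\overline{B}_{\tau'_e}$); and the connected subtree condition holds for each vertex of $\overline{G}$ (clear for clique nodes because they appear in every $\overline{B}_\tau$ and each grafted leaf either contains $\kappa_{c,1}$ attached at a bag that already contains it, or is irrelevant; for $\overline{v}$ because the bags of $T$ containing $v$ already form a connected subtree, and each grafted leaf containing $\overline{v}$ is attached at a node whose bag contains $\overline{v}$; trivially for $\overline{e}$). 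The maximum bag size is $\max\{\tw(G)+1+|K|,\ 4\}$, giving width at most $\tw(G) + \binom{k+3}{2} - 3$, comfortably within the stated bound.

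There is no genuine obstacle in this argument; the only point that requires care is arranging the subdivision nodes $\overline{e}$ so that the subtree condition is not broken when several arcs share the same $\tau_e$. Inserting each $\overline{e}$ into its own grafted leaf rather than directly into $\overline{B}_{\tau_e}$ sidesteps this cleanly, keeps each new leaf of bounded size, and prevents the bags of $T$ from growing with the number of incident arcs.
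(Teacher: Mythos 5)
Your proposal is correct and follows essentially the same construction as the paper's proof: add all clique gadget nodes to every existing bag, and attach a new small bag containing $\overline{e}$, $\overline{v}$, $\overline{w}$ for each arc $e$ at a node whose bag already covers $\{v,w\}$. The only (harmless) difference is that your grafted leaf bags contain just $\kappa_{c,1}$ rather than all of $K$, which in fact yields the marginally sharper width bound $\tw(G)+\binom{k+3}{2}-3$.
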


\begin{figure}[tb]
    \subfigure[A tree decomposition of $G$\label{fig-simple-tw-graph}]{%
        \qquad\includegraphics[scale=0.75]{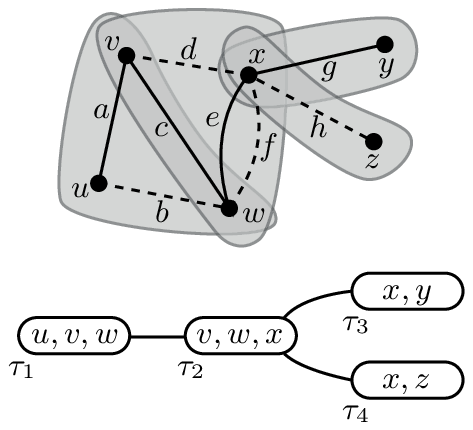}\qquad}
    \subfigure[The corresponding tree decomposition of $\overline{G}$%
        \label{fig-simple-tw-plain}]{%
        \includegraphics[scale=0.75]{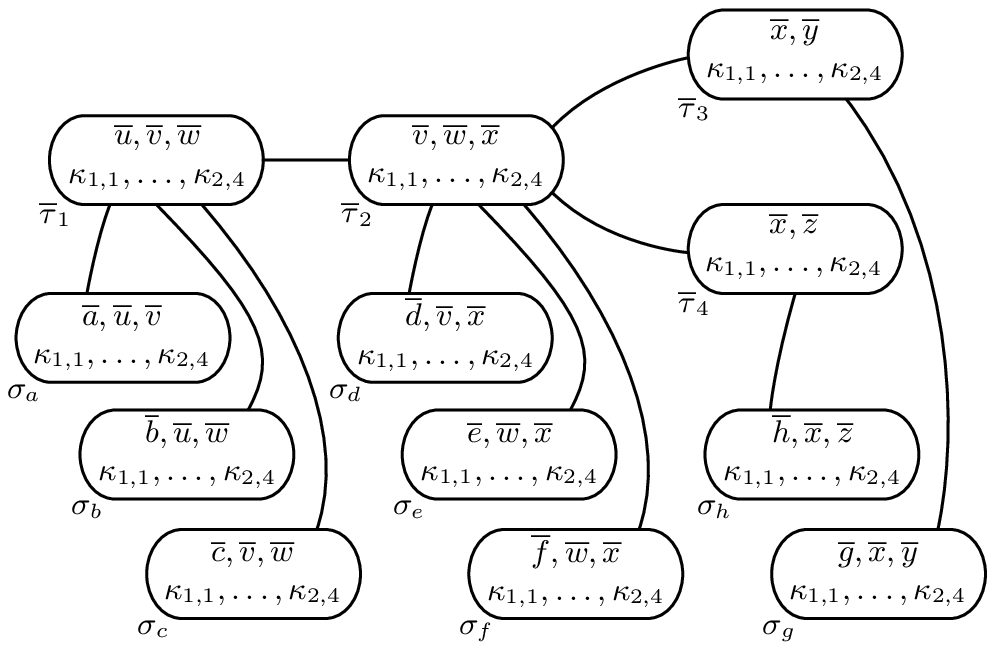}}
    \caption{Bounding the treewidth of the simple graph $\overline{G}$}
    \label{fig-simple-tw}
\end{figure}

\begin{proof}
    Suppose we have a tree decomposition of $G$ with
    underlying tree $T$.  From this we build a tree decomposition of
    $\overline{G}$ with underlying tree $\overline{T}$ as follows
    (see Figure~\ref{fig-simple-tw} for an illustration):
    \begin{enumerate}
        \item
        \label{en-tree-copy}
        In the original tree decomposition, let
        $\tau$ be a node of $T$ and let $B_\tau = \{v_1,\ldots,v_m\}$
        be the corresponding bag.
        In the new tree decomposition, we create a corresponding node
        $\overline{\tau}$ of $\overline{T}$, whose bag
        $B_{\overline{\tau}}$ contains
        the corresponding nodes
        $\overline{v}_1,\ldots,\overline{v}_m$
        plus all of the clique nodes $\kappa_{i,j}$.

        \item
        \label{en-tree-arc}
        For each arc $e=\{(v,w),c\}$ of $G$ we create a new node
        $\sigma_e$ of $\overline{T}$,
        whose corresponding bag $B_{\sigma_e}$ contains
        $\overline{e}$, $\overline{v}$ and $\overline{w}$
        plus all of the clique nodes $\kappa_{i,j}$.

        \item
        \label{en-tree-connect}
        We connect the nodes of $\overline{T}$ as follows.
        For each pair of adjacent nodes $\tau_1,\tau_2$ of $T$,
        we connect the corresponding nodes
        $\overline{\tau}_1,\overline{\tau}_2$ in $\overline{T}$.
        For each arc $e=\{(v,w),c\}$ of $G$, we connect the
        corresponding node $\sigma_e$ of $\overline{T}$
        to one (and only one) arbitrarily chosen
        node $\overline{\tau}$ for which
        $\overline{v},\overline{w} \in B_{\overline{\tau}}$.
    \end{enumerate}

    Note that the choice in step~(\ref{en-tree-connect})
    is always possible: since we began with a tree decomposition of $G$,
    some node $\tau$ of $T$ must have $v,w \in B_\tau$
    and thus $\overline{v},\overline{w} \in B_{\overline{\tau}}$.

    It is simple to show that this construction does indeed yield a tree
    decomposition of $\overline{G}$: all necessary properties of a
    tree decomposition follow directly from the construction above and
    the fact that we began with a tree decomposition of $G$.

    It remains to compute the bag sizes in our new tree decomposition.
    For each bag $B_{\overline{\tau}}$ created in
    step~(\ref{en-tree-copy}), we have
    $|B_{\overline{\tau}}| = |B_\tau| + \sum_{i=3}^{k+2} i =
    |B_\tau| + \binom{k+3}{2} - 3$.
    For each bag $B_{\sigma_e}$ created in step~(\ref{en-tree-arc}),
    we have
    $|B_{\sigma_e}| = 3 + \sum_{i=3}^{k+2} i = \binom{k+3}{2}$.
    Therefore
    $\tw(\overline{G}) \leq \max\left\{\tw(G)+\binom{k+3}{2} - 3,
        \binom{k+3}{2}-1\right\} \leq \tw(G)+\binom{k+3}{2}-1$.
\end{proof}

\begin{lemma} \label{l-simple-mso}
    For any fixed $k \in \N$,
    every MSO formula $\phi(x_1,\ldots,x_t)$
    on edge-coloured graphs with $k$ colours
    has a corresponding MSO formula
    $\overline{\phi}(\overline{x}_1,\ldots,\overline{x}_t)$ on simple graphs
    for which,
    for any edge-coloured graph $G(V,E,C)$ with $|C|=k$ colours:
    \begin{itemize}
        \item Any assignment of values to $x_1,\ldots,x_t$ for which
        $G \models \phi(x_1,\ldots,x_t)$ yields a corresponding
        assignment of values to $\overline{x}_1,\ldots,\overline{x}_t$
        for which
        $\overline{G} \models
        \overline{\phi}(\overline{x}_1,\ldots,\overline{x}_t)$,
        and this assignment is obtained as follows:
        \begin{itemize}
            \item if $x_i = v$ for some node $v \in V$, then
            $\overline{x}_i = \overline{v}$;
            \item if $x_i = e$ for some arc $e \in E$, then
            $\overline{x}_i = \overline{e}$
            (note that $\overline{x}_i$ is a node of
            $\overline{G}$, not an arc);
            \item if $x_i$ is a set $\{v_i\} \subseteq V$ or
            $\{e_i\} \subseteq E$, then $\overline{x}_i$ is the corresponding
            set $\{\overline{v}_i\}$ or $\{\overline{e}_i\}$.
        \end{itemize}
        \item Conversely, any assignment of values to
        $\overline{x}_1,\ldots,\overline{x}_t$ for which
        $\overline{G} \models
        \overline{\phi}(\overline{x}_1,\ldots,\overline{x}_t)$
        is derived from an
        assignment of values to $x_1,\ldots,x_t$ for which
        $G \models \phi(x_1,\ldots,x_t)$, as described above.
    \end{itemize}
\end{lemma}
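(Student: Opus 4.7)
My plan is to prove the correspondence by structural induction on $\phi$, but first I need to express in MSO on the simple graph $\overline{G}$ the three ``types'' of node that Construction~\ref{const-simple} produces: clique nodes, arc-representing nodes (further stratified by colour), and original-vertex nodes. All subsequent translations will relativize quantifiers to these types.

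The crucial structural fact about $\overline{G}$ is that it contains no triangles outside of the $k$ added cliques. This follows from the construction: an arc-node $\overline{e}$ has three neighbours $\overline{v}, \overline{w}, \kappa_{c,1}$ that are pairwise non-adjacent (original nodes are never adjacent to one another or to clique nodes, and arc-nodes are never adjacent to one another), and original nodes see only arc-nodes. Since $k$ is fixed and the added cliques have distinct sizes $3, 4, \ldots, k+2$, I can write for each fixed $i \in \{1, \ldots, k\}$ an MSO formula $\iscol_i(x)$ of bounded size saying ``$x$ lies in a $K_{i+2}$-subgraph but not in a $K_{i+3}$-subgraph''. Thanks to the no-extra-triangles property, this picks out exactly the $i+2$ nodes of the $i$-th added clique. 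Then $\iscol(x) \equiv \bigvee_i \iscol_i(x)$ captures all clique nodes, $\isarc_i(x) \equiv \neg \iscol(x) \wedge \exists y\,(\iscol_i(y) \wedge \adj(x,y))$ captures the arc-nodes of colour $c_i$ (they are the only non-clique nodes adjacent to a node of the $i$-th clique), $\isarc(x) \equiv \bigvee_i \isarc_i(x)$, and $\isnode(x) \equiv \neg \iscol(x) \wedge \neg \isarc(x)$.

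Equipped with these predicates, the inductive translation is essentially mechanical. Node variables and arc variables of $G$ both become node variables of $\overline{G}$ (since arcs of $G$ are represented by new nodes), and set-of-node and set-of-arc variables both become set-of-node variables. Atomic formulae translate as $v=w \mapsto \overline{v}=\overline{w}$, $e=f \mapsto \overline{e}=\overline{f}$, $v \in V \mapsto \overline{v} \in \overline{V}$, $\inc(e,v) \mapsto \adj(\overline{e},\overline{v})$, $\col_i(e) \mapsto \isarc_i(\overline{e})$, and $\adj(v,v') \mapsto \exists y\,(\isarc(y) \wedge \adj(y,\overline{v}) \wedge \adj(y,\overline{v}'))$, with an analogous colour-restricted form for $\adj_i$. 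Propositional connectives are preserved verbatim, while quantifiers are relativized by the appropriate type predicate: for instance $\exists v\,\psi$ becomes $\exists \overline{v}\,(\isnode(\overline{v}) \wedge \overline{\psi})$, and $\forall E\,\psi$ becomes $\forall \overline{E}\,((\forall \overline{x}\,(\overline{x} \in \overline{E} \rightarrow \isarc(\overline{x}))) \rightarrow \overline{\psi})$.

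I do not anticipate any deep technical obstacle, but the delicate checking step is verifying that this relativization really induces the claimed bijection between satisfying assignments: every assignment satisfying $\overline{\phi}$ must assign only original-node values to node variables, only arc-node values to arc variables, and sets composed entirely of such values to set variables, so that it pulls back uniquely along $v \mapsto \overline{v}$ and $e \mapsto \overline{e}$ to an assignment for $\phi$ on $G$. This is immediate from the relativized quantifiers, and together with a routine verification that each atomic translation preserves truth under the correspondence, the structural induction then goes through.
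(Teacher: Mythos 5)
Your overall strategy --- type predicates $\isnode$, $\isarc$, $\iscol_i$ defined via clique membership in $\overline{G}$, followed by a syntactic translation of the atomic formulae and a type discipline on variables --- is essentially the paper's own proof, and your explicit observation that $\overline{G}$ contains no triangles outside the $k$ added cliques is exactly the structural fact that makes the clique-detection predicates sound. However, there is one genuine gap: you enforce the type discipline only by relativizing \emph{quantifiers}, and then assert that the converse direction of the lemma (``every satisfying assignment for $\overline{\phi}$ pulls back to one for $\phi$'') is ``immediate from the relativized quantifiers.'' It is not, because the free variables $\overline{x}_1,\ldots,\overline{x}_t$ are by definition not bound by any quantifier, so nothing in your $\overline{\phi}$ prevents a satisfying assignment from placing clique nodes, or a mixture of arc-nodes and original nodes, into a free set variable. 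For a formula such as $\phi(A) \equiv \forall v\,(v\in A \vee \neg(v\in A))$ your translation is satisfied by \emph{every} subset of nodes of $\overline{G}$, most of which do not pull back to subsets of $V$ or $E$; this destroys the claimed bijection of solutions, which is precisely what the downstream extremum and evaluation applications (Theorems~\ref{t-courcelle-opt} and~\ref{t-courcelle-eval}) rely on. The fix is the paper's third stage: conjoin to $\overline{\phi}$, for each free (as well as bound) variable $\overline{x}$, an explicit clause
\[ \isnode(\overline{x}) \vee \isarc(\overline{x}) \vee
   [\forall z\ z \in \overline{x} \rightarrow \isnode(z)] \vee
   [\forall z\ z \in \overline{x} \rightarrow \isarc(z)]. \]

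A second, smaller slip: your translation of $\adj(v,v')$ as $\exists y\,(\isarc(y)\wedge\adj(y,\overline{v})\wedge\adj(y,\overline{v}'))$ evaluates to true when $v=v'$ and $v$ is incident to any arc, whereas $\adj(v,v)$ is false in $G$ (loops are disallowed in edge-coloured graphs); you need the extra conjunct $\overline{v}\neq\overline{v}'$, which the paper includes.
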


In particular, this means that if $\phi$ is a sentence then
$G \models \phi$ if and only if $\overline{G} \models \overline{\phi}$,
and if $\phi$ has free variables then the solutions
to $G \models \phi(x_1,\ldots,x_t)$ are in
bijection with the solutions to
$\overline{G} \models \overline{\phi}(\overline{x}_1,\ldots,\overline{x}_t)$.

\begin{proof}
    The proof works in three stages:
    (i)~we develop additional ``helper'' relations to constrain the roles
    that variables can play in $\overline{\phi}$;
    (ii)~we translate each piece of the formula $\phi$
    so that statements about $G$ in $\phi$
    become statements about $\overline{G}$ in $\overline{\phi}$, and
    thus solutions to $G \models \phi(x_1,\ldots,x_t)$ become
    solutions to $\overline{G} \models
    \overline{\phi}(\overline{x}_1,\ldots,\overline{x}_t)$; and then
    (iii)~we add additional constraints to $\overline{\phi}$ to avoid any
    additional (and unwanted) solutions to
    $\overline{G} \models
    \overline{\phi}(\overline{x}_1,\ldots,\overline{x}_t)$.

    In the first stage, we develop the following ``helper'' unary relations
    using MSO logic on simple graphs.  These relations only make sense
    when interpreting $\overline{\phi}$ on a simple graph $\overline{G}$
    that was built using Construction~\ref{const-simple}; in other
    contexts they are well-defined but meaningless.
    \begin{itemize}
        \item $\isnode(x)$ indicates that a variable in
        $\overline{\phi}$ represents a node of the original
        edge-coloured graph $G$; that is,
        $x=\overline{v}$ for some $v \in V$.

        \item $\isarc(x)$ indicates that a variable in
        $\overline{\phi}$ represents an arc of the original
        edge-coloured graph $G$; that is,
        $x=\overline{e}$ for some arc $e \in E$.

        \item For each $i=1,\ldots,k$, $\iscol_i(x)$ indicates that a
        variable in $\overline{\phi}$ represents one of the nodes of the
        clique representing the $i$th colour; that is,
        $x=\kappa_{i,j}$ for some $j$.
    \end{itemize}

    These relations are simple (but messy) to piece together using MSO logic.
    For each fixed $i$, the relation $\iscol_i(x)$ is true if and only if
    $x$ represents a node that belongs to an $i$-clique but not an
    $(i+1)$-clique.  The relation $\isarc(x)$ is true if and only if $x$
    is a node that does not belong to a triangle, but is adjacent to a
    node that does.  The relation $\isnode(x)$ is true if and only if
    $x$ is a node for which none of $\isarc(x)$ or $\iscol_i(x)$ are true.

    In the second stage, we translate each piece of $\phi$ to
    a piece of $\overline{\phi}$ as follows:
    \begin{itemize}
        \item Each variable $x_i$ in $\phi$ is replaced by
        $\overline{x}_i$ in $\overline{\phi}$.

        \item Standard logical operations (such as $\wedge$, $\vee$,
        $\neg$, $\rightarrow$ and so on), standard quantifiers
        ($\forall$ and $\exists$), and the equality and inclusion
        relations ($=$ and $\in$) are copied directly from $\phi$ to
        $\overline{\phi}$.

        \item The incidence relation $\inc(e,v)$ in $\phi$ is replaced
        by the phrase
        $\isarc(e) \wedge \isnode(v) \wedge \adj(e,v)$
        in $\overline{\phi}$.

        \item Each colour relation
        $\col_i(e)$ in $\phi$ is replaced by the phrase
        $\exists v \ \isarc(e) \wedge \iscol_i(v) \wedge \adj(e,v)$
        in $\overline{\phi}$.

        \item The adjacency relation
        $\adj(v,v')$ in $\phi$ is replaced by the phrase
        $\exists e \ \isnode(v) \wedge \isnode(v') \wedge
        \adj(v,e) \wedge \adj(v',e) \wedge v \neq v'$
        in $\overline{\phi}$.

        \item Each coloured adjacency relation
        $\adj_i(v,v')$ in $\phi$ can be replaced with an equivalent statement
        in $\phi$ using $\inc(\cdot,\cdot)$ and $\col_i(\cdot)$, and then
        translated to $\overline{\phi}$ as described above.
    \end{itemize}

    It follows directly from this translation process that,
    if an assignment of values to $x_1,\ldots,x_t$ gives
    $G \models \phi(x_1,\ldots,x_t)$, then the corresponding
    assignment of values to $\overline{x}_1,\ldots,\overline{x}_t$
    as described in the lemma statement gives
    $\overline{G} \models
    \overline{\phi}(\overline{x}_1,\ldots,\overline{x}_t)$.

    In the third stage, we eliminate any additional and unwanted solutions to
    $\overline{G} \models
    \overline{\phi}(\overline{x}_1,\ldots,\overline{x}_t)$,
    by insisting that for every variable $x$ in $\phi$,
    the translated variable $\overline{x}$ in $\overline{\phi}$
    must still represent
    a node, arc, set of nodes or set of arcs in the source graph $G$.
    This ensures that logical statements about such variables
    $\overline{x}_i$ in
    $\overline{\phi}$ translate correctly back to logical statements
    about variables $x_i$ in $\phi$.
    Specifically, for each variable $x$ in $\phi$
    (either bound or free), we add the corresponding clause to
    $\overline{\phi}$:
    \[ \isnode(\overline{x}) \vee
       \isarc(\overline{x}) \vee
       [\forall z\ z \in \overline{x} \rightarrow \isnode(z)] \vee
       [\forall z\ z \in \overline{x} \rightarrow \isarc(z)].
       \qedhere\]
\end{proof}

\begin{theorem} \label{t-courcelle-coloured}
    Let $K$ be any class of edge-coloured graphs with fixed colour set
    $C=\{c_1,\ldots,c_k\}$ and with universally bounded treewidth.
    Then:
    \begin{itemize}
        \item For any fixed MSO sentence $\phi$, it is possible to test
        whether $G \models \phi$ for edge-coloured graphs $G \in K$
        in time $O(|G|)$;
        \item For any restricted MSO extremum problem $P$, it is
        possible to solve $P$ for edge-coloured graphs $G \in K$ in time
        $O(|G|)$ under the uniform cost measure;
        \item For any MSO evaluation problem $P$, it is possible to
        solve $P$ for edge-coloured graphs $G \in K$ in time $O(|G|)$
        under the uniform cost measure.
    \end{itemize}
\end{theorem}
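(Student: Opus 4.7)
The plan is to reduce all three assertions to the corresponding results already available in the simple-graph setting, using Construction~\ref{const-simple} together with Lemmas~\ref{l-simple-size}, \ref{l-simple-tw} and~\ref{l-simple-mso}. Given an edge-coloured input $G \in K$ with $k$ colours (fixed), I build $\overline{G}$ in linear time. By Lemma~\ref{l-simple-size}, $|\overline{G}| = O(|G|)$ with the implicit constant depending only on $k$; by Lemma~\ref{l-simple-tw}, $\tw(\overline{G}) \leq \tw(G) + O(k^2)$, so $\overline{G}$ ranges over a class of simple graphs of universally bounded treewidth. These two facts together mean that any linear-time algorithm on the class of $\overline{G}$'s, with constants depending only on $k$ and on the formula, translates to a linear-time algorithm on $G$.

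For the decision assertion I simply form $\overline{\phi}$ from $\phi$ via Lemma~\ref{l-simple-mso} (a fixed translation, independent of $G$) and apply Corollary~\ref{c-courcelle-test} to test $\overline{G} \models \overline{\phi}$; by the lemma this is equivalent to $G \models \phi$. For the restricted extremum assertion, the formula $\phi(A_1,\ldots,A_t)$ with rational linear objective $g(|A_1|,\ldots,|A_t|)$ translates to $\overline{\phi}(\overline{A}_1,\ldots,\overline{A}_t)$, and Lemma~\ref{l-simple-mso} gives a bijection between satisfying assignments with $|A_i| = |\overline{A}_i|$. Thus minimising $g(|\overline{A}_1|,\ldots,|\overline{A}_t|)$ on $\overline{G}$ subject to $\overline{\phi}$ via Theorem~\ref{t-courcelle-opt} returns the desired optimum.

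For the evaluation assertion the only real subtlety is the handling of weights. Given weight functions $w_i \co V \sqcup E \to R$ on $G$, I define lifted weight functions $\overline{w}_i$ on the nodes and arcs of $\overline{G}$ by $\overline{w}_i(\overline{v}) = w_i(v)$ for $v \in V$, $\overline{w}_i(\overline{e}) = w_i(e)$ for $e \in E$, and $\overline{w}_i \equiv 0$ on all clique nodes $\kappa_{c,j}$ (in the additive case) or $\overline{w}_i \equiv 1$ (in the multiplicative case), and likewise zero/one on every arc of $\overline{G}$. The third-stage constraints built into $\overline{\phi}$ by Lemma~\ref{l-simple-mso} force each free set variable $\overline{A}_i$ to consist entirely of nodes of the form $\overline{v}$ or entirely of nodes of the form $\overline{e}$, so clique nodes and arcs of $\overline{G}$ never appear in the sums or products. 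The bijection between satisfying assignments then gives equality of the evaluation sums on $G$ and on $\overline{G}$, and Theorem~\ref{t-courcelle-eval} produces the value in time $O(|\overline{G}|) = O(|G|)$.

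The only point that needs a little care — and where I would check the details most carefully — is the evaluation case, to ensure that padding the lifted weights with $0$ (respectively $1$) genuinely leaves the additive (respectively multiplicative) sum unchanged regardless of how the underlying decomposition algorithm enumerates indices; this is immediate once the third-stage constraints are in force, but it is the place where a naive translation could silently pick up spurious contributions from clique nodes or auxiliary arcs. Everything else is a direct appeal to the three single-graph metatheorems through the translation already established.
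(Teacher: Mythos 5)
Your proposal is correct and follows essentially the same route as the paper: translate $\phi$ to $\overline{\phi}$ via Lemma~\ref{l-simple-mso}, pass from $G$ to $\overline{G}$ using Construction~\ref{const-simple} with the size and treewidth bounds of Lemmata~\ref{l-simple-size} and~\ref{l-simple-tw}, and invoke the three simple-graph metatheorems, assigning trivial weights ($0$ or $1$) to the auxiliary nodes and arcs in the evaluation case. The paper's proof handles the weight-padding point exactly as you do, so no gap remains.
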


\begin{proof}
    Let $P$ be any such problem, and let $\phi$ be its underlying
    MSO sentence or formula.  We first use Lemma~\ref{l-simple-mso}
    to translate $\phi$ to $\overline{\phi}$,
    yielding a new problem $\overline{P}$ in the setting of simple graphs.
    Then, for any edge-coloured graph $G$ given as input to $P$,
    we construct the simple graph $\overline{G}$ and solve
    $\overline{P}$ for $\overline{G}$ instead.
    Lemma~\ref{l-simple-mso} ensures that the solutions to both problems
    are the same.

    For extremum problems, we note that the sizes of the sets
    in each solution to $\phi$ are equal to the sizes of the sets in
    the corresponding solution to $\overline{\phi}$
    (i.e., the value of the extremum
    does not change).  For evaluation problems, the input weight on
    a node $v$ or arc $e$ of $G$ becomes the same input weight
    on the corresponding node $\overline{v}$ or $\overline{e}$ of
    $\overline{G}$; all remaining nodes and arcs of $\overline{G}$
    are assigned trivial weights ($0$ or $1$ according to whether the
    evaluation problem is additive or multiplicative).

    It is clear from Construction~\ref{const-simple} that we can build
    the simple graph $\overline{G}$ in time $O(|G|)$,
    and Lemmata~\ref{l-simple-size} and \ref{l-simple-tw}
    ensure that $\overline{G}$
    has universally bounded treewidth and $O(|G|)$ size.
    The result now follows directly from the
    three original variants of Courcelle's theorem
    (Corollary~\ref{c-courcelle-test},
    Theorem~\ref{t-courcelle-opt} and Theorem~\ref{t-courcelle-eval}).
\end{proof}

%%%%%%%%%%%%%%%%%%%%%%%%%%%%%%%%%%%%%%%%%%%%%%%%%%%%%%%%%%%%%%%%%%%%%%%%
%
%   The metatheorem
%
%%%%%%%%%%%%%%%%%%%%%%%%%%%%%%%%%%%%%%%%%%%%%%%%%%%%%%%%%%%%%%%%%%%%%%%%

\section{Triangulations} \label{s-meta}

In this section we prove our main result, that all three variants of
Courcelle's theorem hold for triangulations of fixed dimension
(Theorem~\ref{t-tri}).

%%%%%%%%%%%%%%%%%%%%%%%%%%%%%%%%%%%%%%%%%%%%%%%%%%%%%%%%%%%%%%%%%%%%%%%%

\subsection{MSO logic on triangulations}

Our first task is to extend MSO logic to the setting of
$d$-dimensional triangulations, for fixed dimension $d \in \N$.
Here we define MSO logic to support:
\begin{itemize}
    \item all of the standard boolean operations of propositional logic
    ($\wedge$, $\vee$, $\neg$, $\rightarrow$, and so on);
    \item for each $i=0,\ldots,d$, variables to represent
    $i$-faces of a triangulation, or sets of $i$-faces of a triangulation;
    \item the standard quantifiers ($\forall$ and $\exists$) and
    the binary equality and inclusion relations ($=$ and $\in$),
    which may be applied to any of these variable types;
    % TODO: In a later version, generalise to i-face <= j-face?
    \item for each $i=0,\ldots,d-1$ and for each ordered sequence
    $\pi_0,\ldots,\pi_i$ of distinct integers from $\{0,\ldots,d\}$,
    a subface relation $\leq_{\pi_0 \ldots \pi_i}$.
\end{itemize}

The relation $(f \leq_{\pi_0 \ldots \pi_i} s)$ indicates that
$f$ is an $i$-face of the triangulation,
$s$ is a simplex of the triangulation,
and that $f$ is identified with the subface of $s$ formed by the simplex
vertices $\pi_0,\ldots,\pi_i$, in a way that vertices $0,\ldots,i$ of
the face $f$ correspond to vertices $\pi_0,\ldots,\pi_i$ of the simplex $s$.

\begin{example}
    Recall the Klein bottle example illustrated in Figure~\ref{fig-kb}.
    Here the three edges $e,f,g$ satisfy the subface relations
    \[ \begin{array}{l@{\qquad}l@{\qquad}l}
        e \leq_{02} \Delta_1 & f \leq_{01} \Delta_1 & g \leq_{01} \Delta_2 \\
        e \leq_{20} \Delta_2 & f \leq_{12} \Delta_1 & g \leq_{12} \Delta_2.
    \end{array} \]
    The triangulation has only one vertex (since all vertices of
    $\Delta_1$ and $\Delta_2$ are identified together); call this
    vertex $v$.  Then $v$ satisfies all possible subface relations
    \[ \begin{array}{l@{\qquad}l@{\qquad}l}
        v \leq_{0} \Delta_1 & v \leq_{1} \Delta_1 & v \leq_{2} \Delta_1 \\
        v \leq_{0} \Delta_2 & v \leq_{1} \Delta_2 & v \leq_{2} \Delta_2.
    \end{array} \]
\end{example}

\begin{notation}
In general, we will adopt the convention that lower-case letters
$s,t,\ldots$ represent simplices and
$f^{(i)},g^{(i)},\ldots$ represent $i$-faces,
and that upper-case letters
$S,T,\ldots$ and $F^{(i)},G^{(i)},\ldots$ represent sets of simplices and
sets of $i$-faces respectively.
%For each $i$, there are $(d+1)!/(d-i)!$ distinct
%subface relations $\leq_{\pi_0 \ldots \pi_i}$ defined for $i$-faces.
\end{notation}

\begin{example}[Orientability]
    Consider $d=2$, i.e., the case of triangulated surfaces.
    We will construct an MSO sentence $\phi$ stating that a
    triangulation represents an \emph{orientable} surface.

    Recall that a 2-dimensional triangulation is orientable if and only if
    each triangle can be assigned an orientation (clockwise or
    anticlockwise) so that adjacent triangles have compatible
    orientations, as illustrated in Figure~\ref{fig-orientability}.

    \begin{figure}[tb]
        \centering
        \includegraphics[scale=1]{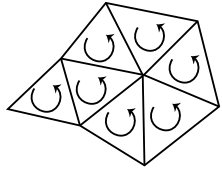}
        \caption{Adjacent triangles have compatible orientations}
        \label{fig-orientability}
    \end{figure}

    The sentence $\phi$ is given below.
    Here $S_+$ and $S_-$ are variables denoting sets of simplices
    with clockwise and anticlockwise orientations respectively.
    To encode the compatibility constraint on adjacent triangles,
    we introduce variables $u$ and $v$ to represent two adjacent
    simplices, and $f^{(1)}$ to represent the common edge along which they are
    joined.
    \[ \begin{array}{l@{\;\rightarrow\;}l}
        \multicolumn{2}{l}{
            \exists S_+ \exists S_-
            \forall s\ \forall f^{(1)}\ \forall u \ \forall v} \\
        \multicolumn{2}{l}{
            (s \in S_+ \vee s \in S_-) \wedge
            \neg (s \in S_+ \wedge s \in S_-) \wedge {}} \\
        \left[(f^{(1)} \leq_{01} u \wedge f^{(1)} \leq_{01} v \wedge
            u \neq v)\right.
            &\left.((u \in S_+ \wedge v \in S_-) \vee
              (u \in S_- \wedge v \in S_+))\right] \wedge {}\\
        \left[(f^{(1)} \leq_{01} u \wedge f^{(1)} \leq_{10} v) \right.
            &\left.((u \in S_+ \wedge v \in S_+) \vee
              (u \in S_- \wedge v \in S_-))\right] \wedge {}\\
        \left[(f^{(1)} \leq_{01} u \wedge f^{(1)} \leq_{02} v) \right.
            &\left.((u \in S_+ \wedge v \in S_+) \vee
              (u \in S_- \wedge v \in S_-))\right] \wedge {}\\
        \left[(f^{(1)} \leq_{01} u \wedge f^{(1)} \leq_{20} v) \right.
            &\left.((u \in S_+ \wedge v \in S_-) \vee
              (u \in S_- \wedge v \in S_+))\right] \wedge {}\\
        \multicolumn{2}{c}{\vdots} \\
        \left[(f^{(1)} \leq_{21} u \wedge f^{(1)} \leq_{21} v \wedge
            u \neq v)\right.
            &\left.((u \in S_+ \wedge v \in S_-) \vee
              (u \in S_- \wedge v \in S_+))\right].
       \end{array} \]

    The first line of this sentence is just quantifiers, and
    the second line ensures that $S_+$ and $S_-$ partition the simplices.
    The remaining lines encode the fact that adjacent simplices must have
    compatible orientations.  They do this by iterating through all
    $6 \times 6$ possible
    ways in which $f^{(1)}$ could appear as both an edge of $u$ and an edge
    of $v$ (making $u$ and $v$ adjacent), and in each case a simple
    parity check determines whether $u$ and $v$ must have the same orientation
    (as in the case $f^{(1)} \leq_{01} u \wedge f^{(1)} \leq_{02} v$)
    or opposite orientations
    (as in the case $f^{(1)} \leq_{01} u \wedge f^{(1)} \leq_{20} v$).

    For the six cases where $u$ and $v$ use the \emph{same} subface
    relation (e.g., the first and last cases above), we add the
    additional requirement
    $u \neq v$ to ensure that triangles $u$ and $v$
    lie on opposite sides of the edge $f^{(1)}$.
\end{example}

We return now to notation and definitions.
If $\tri$ is a $d$-dimensional triangulation and $\phi$ is an
MSO sentence as outlined above, the notation $\tri \models \phi$
indicates (as usual)
that the interpretation of $\phi$ in the triangulation $\tri$
is a true statement.

We define extremum and evaluation problems as before, though this time we
must alter the latter definition so that weights are given to
the faces of a triangulation (instead of nodes and arcs of a graph).
Specifically, in the setting of MSO logic on $d$-dimensional triangulations:
\begin{itemize}
\item
A \emph{restricted MSO extremum problem}
consists of an MSO formula $\phi(A_1,\ldots,A_t)$
with free set variables $A_1,\ldots,A_t$,
and a rational linear function $g(x_1,\ldots,x_t)$.  Its interpretation
is as follows: given a $d$-dimensional triangulation $\tri$ as input,
we are asked to minimise $g(|A_1|,\ldots,|A_t|)$ over all sets
$A_1,\ldots,A_t$ for which $\tri \models \phi(A_1,\ldots,A_t)$.
\item
An \emph{MSO evaluation problem}
consists of an MSO formula $\phi(A_1,\ldots,A_t)$
with $t$ free set variables $A_1,\ldots,A_t$.
The input to the problem is a $d$-dimensional triangulation $\tri$,
together with $t$ weight functions
$w_1,\ldots,w_t \co F_0 \sqcup \ldots \sqcup F_d \to R$,
where $F_i$ denotes the set of all $i$-faces of $\tri$,
and $R$ is some ring or field.
The problem then asks us to compute one of the quantities
\[ \sum_{\tri \models \phi(A_1,\ldots,A_t)}
    \left\{
    \sum_{i=1}^t \sum_{x_i \in A_i} w_i(x_i)
    \right\}
    \quad\mbox{or}\quad
   \sum_{\tri \models \phi(A_1,\ldots,A_t)}
    \left\{
    \prod_{i=1}^t \prod_{x_i \in A_i} w_i(x_i)
    \right\}. \]
\end{itemize}

We note again that MSO evaluation problems include
counting problems as a special case (simply use the multiplicative variant
with all weights set to $1$).
For examples of extremum and evaluation problems,
see discrete Morse matchings
(Section~\ref{s-morse}) and the Turaev-Viro invariants
(Section~\ref{s-tv}) respectively.

%%%%%%%%%%%%%%%%%%%%%%%%%%%%%%%%%%%%%%%%%%%%%%%%%%%%%%%%%%%%%%%%%%%%%%%%

\subsection{Metatheorems on triangulations}

We begin this section by introducing coloured Hasse diagrams, which allow
us to translate between triangulations and edge-coloured graphs.
Following this we present and prove the three variants of Courcelle's
theorem on triangulations (Theorem~\ref{t-tri}).

In settings such as simplicial complexes or polytopes, a Hasse
diagram is a graph with a node for each face, and an
arc whenever an $i$-face appears as a subface of an $(i+1)$-face.
Here we extend Hasse diagrams to include more precise information about
\emph{how} subfaces are embedded, and to support situations
in which a face $f$ is embedded as a subface of another face $f'$ more
than once.

\begin{defn} \label{d-hasse}
    Let $\tri$ be a $d$-dimensional triangulation.
    Then the \emph{coloured Hasse diagram} of $\tri$,
    denoted $\hasse(\tri)$, is the following edge-coloured graph:
    \begin{itemize}
        \item The colours of $\hasse(\tri)$ are,
        for all $i=0,\ldots,d-1$,
        all ordered sequences $\pi_0 \ldots \pi_i$ of
        distinct integers from the set $\{0,\ldots,i+1\}$
        (so exactly one integer from the set is not used).
        We also allow an additional ``empty colour'', denoted by a dash ($-$).
        This gives $\sum_{i=1}^{d+1} i!$ colours in total.

        For example, for $d=2$ we use the following colours:
        $-,0,1,01,02,10,12,\allowbreak 20,21$.  Note that there is no colour
        labelled $2$ in this list.

        \item The nodes of $\hasse(\tri)$ are the $i$-faces of $\tri$,
        for all $i=0,\ldots,d$.
        We also add an extra node for the ``empty face'',
        denoted $\emptyset$.

        \item The arcs of $\hasse(\tri)$ are as follows.
        If $f$ is an $i$-face and $g$ is an $(i+1)$-face with
        $f \leq_{\pi_0\ldots\pi_i} g$, then we place an arc between
        nodes $f$ and $g$ of colour $\pi_0\ldots\pi_i$.
        We also add an arc between each vertex of the triangulation
        and the empty node $\emptyset$, coloured by the
        ``empty colour'' $-$.
    \end{itemize}
\end{defn}

\begin{figure}[tb]
    \centering
    \subfigure[A Klein bottle $\mathcal{K}$]{%
        \includegraphics[scale=0.9]{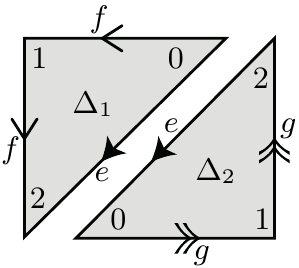}}
    \qquad
    \subfigure[The coloured Hasse diagram $\hasse(\mathcal{K})$]{%
        \qquad\includegraphics[scale=0.9]{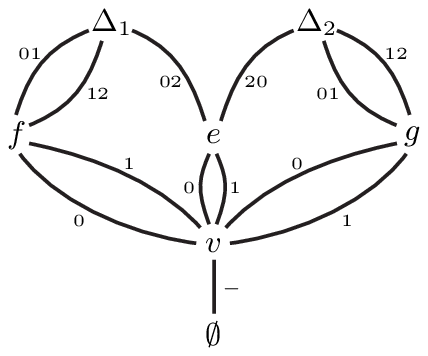}\qquad}
    \caption{An example of a coloured Hasse diagram}
    \label{fig-hasse}
\end{figure}

\begin{example}
    Figure~\ref{fig-hasse} shows a $2$-dimensional Klein bottle
    (the same as seen earlier in Example~\ref{ex-tri}), alongside its
    coloured Hasse diagram.  For consistency with the
    earlier example, we label the common vertex as $v$,
    the three edges as $e,f,g$, and the two
    triangles as $\Delta_1,\Delta_2$.
\end{example}

We now establish a series of results that allow us to convert problems
about triangulations into problems about edge-coloured graphs.
Recall from Section~\ref{s-prelim-tri} that if $\tri$ is a triangulation
then $|\tri|$ denotes the number of
top-dimensional simplices of $\tri$, and $\dual(\tri)$ denotes the
dual graph of $\tri$.
Following the pattern established in Section~\ref{s-col-meta},
the following lemmata essentially show
that for fixed dimension $d$:
\begin{itemize}
\item the size $|\hasse(\tri)|$ is linear in $|\tri|$;
\item the treewidth $\tw(\hasse(\tri))$ is at worst linear in
$\tw(\dual(\tri))$; and
\item MSO formulae on $\tri$ translate to MSO formulae on $\hasse(\tri)$.
\end{itemize}
The triangulation-based variants of Courcelle's theorem then follow
naturally from these results, as seen in Theorem~\ref{t-tri}.

\begin{lemma} \label{l-hasse-size}
    For any $d$-dimensional triangulation $\tri$ of positive size, we have
    $|\hasse(\tri)| \leq 2^d (d+3) \cdot |\tri|$.
\end{lemma}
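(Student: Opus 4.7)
The plan is to bound separately the number of nodes and the number of arcs of $\hasse(\tri)$, then combine them using the fact that $|\tri| \geq 1$.

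For the nodes, I would first observe that each of the $n = |\tri|$ top-dimensional simplices contributes $\binom{d+1}{i+1}$ candidate $i$-faces, so after identification the triangulation has at most $\binom{d+1}{i+1} \cdot |\tri|$ distinct $i$-faces. Summing $i=0,\ldots,d$ and using $\sum_{i=0}^d \binom{d+1}{i+1} = 2^{d+1}-1$ gives at most $(2^{d+1}-1)|\tri|$ nodes coming from true faces, and one extra node for the empty face.

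For the arcs, the key observation is that each $(i+1)$-face $g$ of $\tri$ has exactly $i+2$ distinct $i$-dimensional subfaces (one for each vertex of $g$ omitted), and each such subface contributes exactly one arc of the Hasse diagram from $g$ down to the corresponding $i$-face. Hence the number of arcs connecting level $i$ to level $i+1$ is at most $(i+2)\binom{d+1}{i+2}|\tri|$. Setting $j=i+2$ and applying the identity $j\binom{d+1}{j}=(d+1)\binom{d}{j-1}$, the total over $i=0,\ldots,d-1$ telescopes to $(d+1)(2^d-1)|\tri|$. Finally, the arcs incident to the empty node contribute one per vertex of $\tri$, giving at most $(d+1)|\tri|$ more arcs. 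Adding these yields a total arc count of at most $(d+1)\cdot 2^d\cdot |\tri|$.

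Combining nodes and arcs gives $|\hasse(\tri)| \leq (2^{d+1}-1)|\tri| + 1 + (d+1)2^d |\tri|$. Since $|\tri|\geq 1$, the stray $+1$ can be absorbed into the leading term, and the two $|\tri|$-coefficients rearrange as $2\cdot 2^d + (d+1)2^d = (d+3)2^d$, yielding the claimed bound $|\hasse(\tri)| \leq 2^d(d+3)|\tri|$.

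This is essentially a pure bookkeeping argument; I do not expect any real obstacle. The only mild subtlety is making sure the arc count is based on the number of $(i+1)$-faces (not pairs of $(i,i+1)$-face incidences counted from below), so that one does not over-count by a factorial factor coming from the many possible colours $\pi_0\ldots\pi_i$; the definition guarantees that for each downward step from a fixed $(i+1)$-face exactly one coloured arc is produced per omitted vertex.
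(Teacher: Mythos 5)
Your proposal is correct and follows essentially the same route as the paper: bound the number of nodes by $\sum_{i=0}^{d}\binom{d+1}{i+1}|\tri|+1\leq 2^{d+1}|\tri|$, count the arcs by observing that each face of dimension $i+1$ carries exactly $i+2$ downward coloured arcs (plus one arc per vertex to the node $\emptyset$), and sum to get $2^d(d+1)|\tri|$ arcs. The only difference is cosmetic --- you index the arc count from the upper face of each arc while the paper indexes from $i$-faces with $i\geq 1$ --- so the two computations are identical up to a shift of summation variable.
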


\begin{proof}
    Each individual $d$-simplex has $\binom{d+1}{i+1}$ distinct $i$-faces
    for each $i=0,\ldots,d$,
    and so the triangulation has
    $\leq \binom{d+1}{i+1} |\tri|$ distinct $i$-faces in total
    (typically there are fewer, since
    individual faces of simplices are identified together in the
    triangulation).
    Therefore $\hasse(\tri)$ has at most
    $1 + \sum_{i=0}^d \binom{d+1}{i+1}|\tri|
    \leq 2^{d+1}|\tri|$ nodes, including the special node $\emptyset$.

    For each $i=1,\ldots,d$,
    each $i$-face $g$ of the triangulation has precisely
    $(i+1)$ subface relationships of the form $f \leq_{\pi_0\ldots\pi_{i-1}} g$
    where $f$ is an $(i-1)$-face.
    %(the same
    %subface $f$ may appear more than once,
    %but with different sequences $\pi_0\ldots\pi_{i-1}$).
    In addition, each $0$-face (vertex) of the triangulation has
    exactly one arc running to the empty node $\emptyset$.
    Therefore $\hasse(\tri)$ has at most
    $\binom{d+1}{1}|\tri| + \sum_{i=1}^{d} (i+1) \binom{d+1}{i+1}|\tri|
    = 2^d(d+1) |\tri|$ arcs.

    Combining these counts gives
    $|\hasse(\tri)| \leq 2^{d+1}|\tri| + 2^d(d+1) |\tri| =
    2^d(d+3) |\tri|$.
\end{proof}

\begin{lemma} \label{l-hasse-tw}
    For any $d$-dimensional triangulation $\tri$ of positive size, we have
    $\tw(\hasse(\tri)) \leq (2^{d+1}-1)(\tw(\dual(\tri)) + 1)$.
\end{lemma}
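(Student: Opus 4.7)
The plan is to lift a tree decomposition of $\dual(\tri)$ to one of $\hasse(\tri)$ on the same underlying tree. Given a tree decomposition $(T, \{B_\tau\}_{\tau \in T})$ of $\dual(\tri)$ of width $w = \tw(\dual(\tri))$, I define new bags on $T$ by
\[
B'_\tau \;=\; \{\emptyset\} \;\cup\; \bigcup_{s \in B_\tau} F(s),
\]
where $F(s)$ is the set of nonempty face-nodes of $\hasse(\tri)$ that arise as subfaces of the $d$-simplex $s$ (including $s$ itself). Intuitively, each simplex carries along its full family of at most $2^{d+1}-1$ subfaces, so the width should scale by this factor.

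Three of the four tree-decomposition axioms are routine. A $d$-simplex has $\sum_{i=0}^{d}\binom{d+1}{i+1} = 2^{d+1}-1$ nonempty abstract subfaces, so $|F(s)| \leq 2^{d+1}-1$ and $|B'_\tau| \leq 1 + (w+1)(2^{d+1}-1)$, yielding the claimed width bound $(2^{d+1}-1)(\tw(\dual(\tri))+1)$. Every face of $\tri$ is a subface of some $d$-simplex lying in some $B_\tau$, and $\emptyset$ lies in every $B'_\tau$, which handles coverage. For arc coverage, every arc of $\hasse(\tri)$ between consecutive-dimension faces $f \leq g$ is contained in $F(s)$ for any ambient simplex $s \geq g$ drawn from some $B_\tau$, and arcs between a vertex and $\emptyset$ are immediate.

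The main step is verifying the subtree-connectivity axiom: for each node $x$ of $\hasse(\tri)$, the set $\{\tau : x \in B'_\tau\}$ must form a connected subtree of $T$. This is immediate for $\emptyset$ and for each $d$-simplex; for a lower-dimensional face $f$, one has $\{\tau : f \in B'_\tau\} = \bigcup_{f \leq s} T_s$. The crucial observation is that the set of $d$-simplices containing $f$ induces a connected subgraph of $\dual(\tri)$: two abstract occurrences of $f$ in distinct $d$-simplices can only be identified in $\tri$ through a finite chain of facet gluings, and at each step of such a chain the two simplices involved are glued along a facet that itself contains $f$, so consecutive simplices in the chain are dual-adjacent and all contain $f$. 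Combined with the standard fact that in any tree decomposition the subtrees indexed by the vertices of a connected subgraph of the underlying graph have connected union, this delivers the required connectivity of $T_f$. I expect the main obstacle to be extracting this facet-chain argument cleanly from the definition of a triangulation, since one must confirm that the equivalence-class description of faces is in fact generated by facet-identifications that each contain the face in question.
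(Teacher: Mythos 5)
Your proposal is correct and follows essentially the same route as the paper's proof: the same tree with bags enlarged to contain each simplex's full subface family plus $\emptyset$, the same width count $|B'_\tau| \leq 1 + (2^{d+1}-1)|B_\tau|$, and the same key connectivity argument that the simplices containing a face $f$ are linked by facet gluings (dual-graph arcs) whose glued facets contain $f$. The "obstacle" you flag at the end is exactly the point the paper also stresses -- that lower-dimensional identifications are generated solely by facet gluings -- and it goes through because of how the paper defines triangulations.
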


\begin{proof}
    Suppose we have a tree decomposition of $\dual(\tri)$, with
    underlying tree $T$ and bags $\{B_\tau\}$.
    From this we build a tree decomposition of $\hasse(\tri)$
    using the same tree $T$ but with different bags $\{B'_\tau\}$.

    Specifically, let $\tau$ be a node of $T$.
    The bag $B_\tau$ contains a set of nodes of
    $\dual(\tri)$, or equivalently a set of simplices
    $\Delta_1,\ldots,\Delta_t$ of $\tri$.
    We define the corresponding bag $B'_\tau$ to contain those nodes of
    $\hasse(\tri)$ that denote all of $\Delta_1,\ldots,\Delta_t$,
    all of their subfaces (of any dimension),
    and also the empty face $\emptyset$.

    Assume for now that this is indeed a tree decomposition of $\hasse(\tri)$
    (we prove this shortly).
    Each $\Delta_i$ has $\leq 2^{d+1}-2$ non-empty proper subfaces
    (possibly fewer if some of these subfaces are identified),
    and so $|B'_\tau| \leq (2^{d+1}-1)|B_\tau|+1$.  Therefore
    $\tw(\hasse(\tri))+1 \leq (2^{d+1}-1)[\tw(\dual(\tri))+1]+1$,
    which produces the final result.

    It remains to show that our construction does yield a tree
    decomposition of $\hasse(\tri)$.
    It is clear that every node of $\hasse(\tri)$ belongs to a bag
    $B'_\tau$.
    Moreover, every arc of $\hasse(\tri)$ has both endpoints in some
    common bag $B'_\tau$, since each bag containing a face $f$ will
    also contain every subface of $f$.

    To prove the subtree connectivity property for the bags $\{B'_\tau\}$:
    \begin{itemize}
        \item Every bag contains $\emptyset$.
        \item For each simplex $\Delta_i$, we already know from the
        original tree decomposition that the bags
        containing $\Delta_i$ form a connected subtree of $T$.
        \item Consider some $i$-face $f$ of $\tri$ for $i < d$,
        and let $\Delta_1,\ldots,\Delta_t$ be the simplices of $\tri$
        that contain $f$ as a subface.

        Recall that $f$ is in fact an equivalence
        class of individual $i$-faces of the simplices
        $\Delta_1,\ldots,\Delta_t$ that are identified
        \emph{as a consequence of the facet gluings}.
        In other words, the individual appearances of $f$ in each simplex
        $\Delta_1,\ldots,\Delta_t$ are linked by a series of gluings
        of facets of these simplices---that is, arcs of the dual graph
        $\dual(\tri)$ that connect $\Delta_1,\ldots,\Delta_t$.

        The bags containing each $\Delta_i$ form a connected
        subtree $T_i$, and each arc of $\dual(\tri)$ that joins
        $\Delta_i$ with $\Delta_j$ has a corresponding bag
        $B'_\tau$ for which $\Delta_i,\Delta_j \in B'_\tau$.
        Therefore these arcs effectively join the subtrees
        $T_1,\ldots,T_t$ together, and so all bags containing $f$ form a
        single (larger) connected subtree of $T$.
        \qedhere
    \end{itemize}
\end{proof}

We emphasise that the proof of Lemma~\ref{l-hasse-tw} makes critical use
of our definition of a $d$-dimensional triangulation, where we only
explicitly identify \emph{facets} of $d$-simplices, and all
lower-dimensional face identifications are just a consequence of this.
The proof above does not work for more general simplicial complexes (where
lower-dimensional faces can be independently identified), but again we
note that our setting here covers all reasonable definitions of a
triangulated \emph{manifold}.

\begin{lemma} \label{l-hasse-mso}
    For any fixed dimension $d \in \N$,
    every MSO formula $\phi(x_1,\ldots,x_t)$
    on $d$-dimensional triangulations
    has a corresponding MSO formula
    $\overline{\phi}(\overline{x}_1,\ldots,\overline{x}_t)$ on
    edge-coloured graphs with $k=\sum_{i=1}^{d+1} i!$ colours
    for which,
    for any $d$-dimensional triangulation $\tri$:
    \begin{itemize}
        \item Any assignment of values to $x_1,\ldots,x_t$ for which
        $\tri \models \phi(x_1,\ldots,x_t)$ yields a corresponding
        assignment of values to $\overline{x}_1,\ldots,\overline{x}_t$
        for which
        $\hasse(\tri) \models
        \overline{\phi}(\overline{x}_1,\ldots,\overline{x}_t)$.
        This assignment is obtained by replacing faces of $\tri$
        with the corresponding nodes of $\hasse(\tri)$.

        \item Conversely, any assignment of values to
        $\overline{x}_1,\ldots,\overline{x}_t$ for which
        $\hasse(\tri) \models
        \overline{\phi}(\overline{x}_1,\ldots,\overline{x}_t)$
        is derived from an
        assignment of values to $x_1,\ldots,x_t$ for which
        $\tri \models \phi(x_1,\ldots,x_t)$, as described above.
    \end{itemize}
\end{lemma}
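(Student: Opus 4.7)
The plan is to mirror the three-stage approach of Lemma~\ref{l-simple-mso}:
(i) develop helper unary relations that identify, for each node of $\hasse(\tri)$, what kind of face it represents;
(ii) translate each logical construct of $\phi$ into an equivalent construct of $\overline{\phi}$ operating on the Hasse diagram; and
(iii) add constraints to $\overline{\phi}$ so that every variable is forced to represent a face (or set of faces) of the correct dimension, thereby eliminating any spurious solutions.

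For the first stage I would define helper relations $\isface_i(x)$ for each $i = 0,\ldots,d$, together with a further relation picking out the node $\emptyset$. The key observation is that in $\hasse(\tri)$ the colour of an arc uniquely determines the pair of dimensions at its endpoints: a colour $\pi_0 \ldots \pi_i$ of length $i+1$ (drawn from $\{0,\ldots,i+1\}$) can only connect an $i$-face to an $(i+1)$-face, while the empty colour ``$-$'' connects vertices to $\emptyset$. One can therefore characterise $\emptyset$ as the unique node whose incident arcs all carry colour ``$-$'', characterise vertices as the remaining nodes adjacent to $\emptyset$, and then unroll the recursion $\isface_i(x) \equiv (\exists y)\ \isface_{i-1}(y) \wedge (\text{$x,y$ are joined by an arc of some length-$i$ colour})$ for each $i=1,\ldots,d$. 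Because $d$ is fixed, each $\isface_i$ expands to a finite explicit MSO formula.

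The second stage is routine for most constructs: variables, Boolean operations, quantifiers, equality and inclusion translate directly, provided that any quantifier $\forall x$ or $\exists x$ over $i$-faces in $\phi$ is relativised in $\overline{\phi}$ to the predicate $\isface_i$. The genuinely interesting case is the subface relation $f \leq_{\pi_0 \ldots \pi_i} s$, since $\hasse(\tri)$ records subface incidences only one dimension at a time while $s$ is a top-dimensional simplex. To express this, I would existentially quantify over intermediate faces $f^{(i+1)}, f^{(i+2)}, \ldots, f^{(d-1)}$ and chain together Hasse-diagram arcs from $f$ up to $s$, imposing a finite disjunction over all sequences of arc colours whose iterated composition yields $\pi_0 \ldots \pi_i$; composition here is the rule that $f \leq_{\mu_0 \ldots \mu_i} g$ and $g \leq_{\sigma_0 \ldots \sigma_{i+1}} h$ imply $f \leq_{\sigma_{\mu_0} \ldots \sigma_{\mu_i}} h$. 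With $d$ fixed, both the number of intermediate faces quantified and the number of valid colour sequences are bounded by constants depending only on $d$, so the resulting formula remains finite.

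For the third stage, for each variable $x$ in $\phi$ representing an $i$-face I would conjoin $\isface_i(\overline{x})$ to $\overline{\phi}$, and for each set variable $X$ representing a set of $i$-faces I would conjoin $\forall z\,(z \in \overline{X} \rightarrow \isface_i(z))$. Together with the relativised quantifiers from stage (ii), this pins every variable of $\overline{\phi}$ to the correct role and secures the claimed bijection between satisfying assignments on $\tri$ and on $\hasse(\tri)$. I expect the main obstacle to lie in stage (ii): verifying that the enumeration of colour-sequence compositions is exhaustive and that every chain of arcs in $\hasse(\tri)$ genuinely corresponds to a single well-defined subface relation in $\tri$ (with no spurious paths arising because the same pair of faces may be connected by several distinct arc chains). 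Given the bounded dimension and the template provided by Lemma~\ref{l-simple-mso}, however, this should reduce to careful case-by-case bookkeeping rather than any substantive new argument.
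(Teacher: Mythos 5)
Your proposal is correct and follows essentially the same three-stage argument as the paper: helper relations $\isface_i$, direct translation of the logical constructs with the subface relation $\leq_{\pi_0\ldots\pi_i}$ expanded as a bounded disjunction over colour-composed chains of Hasse-diagram arcs, and final clauses pinning each variable to a face or set of faces. The only cosmetic difference is that you define $\isface_i$ by recursion upward from the node $\emptyset$, whereas the paper characterises each $\isface_i$ locally by the colours of its incident arcs; both are valid.
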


As with the earlier Lemma~\ref{l-simple-mso}, this in particular
implies that if $\phi$ is a sentence then
$\tri \models \phi$ if and only if $\hasse(\tri) \models \overline{\phi}$,
and if $\phi$ has free variables then the solutions
to $\tri \models \phi(x_1,\ldots,x_t)$ are in
bijection with the solutions to
$\hasse(\tri) \models \overline{\phi}(\overline{x}_1,\ldots,\overline{x}_t)$.

\begin{proof}
    Following the proof of Lemma~\ref{l-simple-mso},
    we work in three stages:
    (i)~we develop ``helper'' relations
    to constrain the roles that variables can play in $\overline{\phi}$;
    (ii)~we translate each component of $\phi$
    to a corresponding component of $\overline{\phi}$; and
    (iii)~we add additional constraints to $\overline{\phi}$ to avoid
    spurious unwanted solutions to $\hasse(\tri) \models
    \overline{\phi}(\overline{x}_1,\ldots,\overline{x}_t)$.

    For convenience, when developing the formula $\overline{\phi}$
    we give our $\sum_{i=1}^{d+1} i!$ colours the same labels that
    would appear in a coloured Hasse diagram.
    That is, we use the empty colour $-$, plus the series of colours
    $\pi_0\ldots\pi_i$ as described in Definition~\ref{d-hasse}.

    In the first stage we develop ``helper'' unary relations
    $\isface_i(x)$ for $i=0,\ldots,d$,
    using MSO logic on edge-coloured graphs.
    These relations only makes sense
    when interpreting $\overline{\phi}$ on the coloured Hasse diagram
    of a $d$-dimensional triangulation;
    in other contexts they are well-defined but meaningless.

    Each relation $\isface_i(x)$ encodes the fact that a
    variable in $\overline{\phi}$
    represents an $i$-face of the original triangulation $\tri$.
    To build these relations using MSO logic:
    \begin{itemize}
        \item We encode $\isface_0(x)$ by requiring that $x$ is incident
        with an arc of the empty colour $-$ and also an arc of
        some other colour.
        \item We encode $\isface_i(x)$ for $1 \leq i \leq d-1$ by
        requiring that $x$ is incident with an arc of some colour
        $\pi_0\ldots\pi_{i-1}$ and also an arc of some colour
        $\pi_0\ldots\pi_i$.
        \item We encode $\isface_d(x)$ by requiring that $x$ is incident
        with an arc of some colour $\pi_0\ldots\pi_{d-1}$ but no arc of any
        colour $\pi_0\ldots\pi_{d-2}$.
    \end{itemize}

    In the second stage, we translate each piece of $\phi$ to
    a piece of $\overline{\phi}$ as follows:
    \begin{itemize}
        \item Each variable $x_i$ in $\phi$ is replaced by
        $\overline{x}_i$ in $\overline{\phi}$.

        \item Standard logical operations (such as $\wedge$, $\vee$,
        $\neg$, $\rightarrow$ and so on), standard quantifiers
        ($\forall$ and $\exists$), and the equality and inclusion
        relations ($=$ and $\in$) are copied directly from $\phi$ to
        $\overline{\phi}$.

        \item Each subface relation
        $(f \leq_{\pi_0\ldots\pi_i} s)$ in $\phi$ is replaced by a
        long (but bounded length) phrase in $\overline{\phi}$
        that ensures $\isface_i(f) \wedge \isface_{d}(s)$,
        and that enforces the exact subface relation by enumerating
        all possible chains through the Hasse diagram that pass through
        intermediate $(i+1),\ldots,(d-1)$-faces.

        For example, in $d=3$ dimensions the edge-to-tetrahedron
        relationship $f^{(1)} \leq_{01} s$ could be encoded as:
        \[ \begin{array}{ll}
        \multicolumn{2}{l}{\isface_1(f^{(1)}) \wedge \isface_3(s) \wedge{}}
        \smallskip\\
        \left(\ \exists f^{(2)}\right.
                        & [\adj_{01}(f^{(1)}, f^{(2)})\ \wedge
                         \ \adj_{012}(f^{(2)}, s)]\quad\vee {}\\
                        & [\adj_{10}(f^{(1)}, f^{(2)})\ \wedge
                         \ \adj_{102}(f^{(2)}, s)]\quad\vee {}\\
                        & [\adj_{02}(f^{(1)}, f^{(2)})\ \wedge
                         \ \adj_{021}(f^{(2)}, s)]\quad\vee {}\\
                        & [\adj_{20}(f^{(1)}, f^{(2)})\ \wedge
                         \ \adj_{120}(f^{(2)}, s)]\quad\vee {}\\
                        & [\adj_{12}(f^{(1)}, f^{(2)})\ \wedge
                         \ \adj_{201}(f^{(2)}, s)]\quad\vee {}\\
                        & [\adj_{21}(f^{(1)}, f^{(2)})\ \wedge
                         \ \adj_{210}(f^{(2)}, s)]
                           \left.\llap{\phantom{$f^{(1)}$}}\qquad\right).
        \end{array} \]
    \end{itemize}

    This ensures that,
    if an assignment of values to $x_1,\ldots,x_t$ gives
    $\tri \models \phi(x_1,\ldots,x_t)$, the corresponding
    assignment of values to $\overline{x}_1,\ldots,\overline{x}_t$
    gives $\hasse(\tri) \models
    \overline{\phi}(\overline{x}_1,\ldots,\overline{x}_t)$.

    In the third stage, we eliminate additional and unwanted solutions to
    $\hasse(\tri) \models
    \overline{\phi}(\overline{x}_1,\ldots,\overline{x}_t)$
    by insisting that, for every variable $x$ that appears in $\phi$
    (either bound or free),
    the translated variable $\overline{x}$ in $\overline{\phi}$
    must represent an $i$-face or set of $i$-faces in the triangulation:
    %This ensures that logical statements about such variables
    %$\overline{x}_i$ in
    %$\overline{\phi}$ translate correctly back to logical statements
    %about variables $x_i$ in $\phi$.
    \[ \begin{array}{l}
    \isface_0(\overline{x}) \vee \ldots \vee \isface_d(\overline{x}) \vee {}\\
       \left[\forall z\ z \in \overline{x} \rightarrow \isface_0(z)\right] \vee
       \ldots \vee
       \left[\forall z\ z \in \overline{x} \rightarrow \isface_d(z)\right].
    \end{array}\]
    This ensures that every solution to $\hasse(\tri) \models
    \overline{\phi}(\overline{x}_1,\ldots,\overline{x}_t)$
    translates back to a corresponding solution to
    $\tri \models \phi(x_1,\ldots,x_t)$.
\end{proof}

\begin{theorem}
    \label{t-tri}
    For fixed dimension $d \in \N$,
    let $K$ be any class of $d$-dimensional triangulations whose dual
    graphs have universally bounded treewidth.  Then:
    \begin{itemize}
        \item
        For any fixed MSO sentence $\phi$,
        it is possible to test whether $\tri \models \phi$
        for triangulations $\tri \in K$ in time $O(|\tri|)$.
        \item
        For any restricted MSO extremum problem $P$,
        it is possible to solve $P$ for triangulations
        $\tri \in K$ in time $O(|\tri|)$ under the uniform cost measure.
        \item
        For any MSO evaluation problem $P$,
        it is possible to solve $P$ for triangulations
        $\tri \in K$ in time $O(|\tri|)$ under the uniform cost measure.
    \end{itemize}
\end{theorem}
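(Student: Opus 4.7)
The plan is to mirror exactly the strategy used in Theorem~\ref{t-courcelle-coloured}: reduce each triangulation problem to an edge-coloured graph problem via the coloured Hasse diagram $\hasse(\tri)$, and then invoke the three variants of Courcelle's theorem for edge-coloured graphs already established. Given an input $\tri \in K$ together with the MSO sentence or formula $\phi$ defining the problem, I would first build $\hasse(\tri)$ directly from Definition~\ref{d-hasse}; Lemma~\ref{l-hasse-size} guarantees $|\hasse(\tri)| = O(|\tri|)$ for fixed $d$, and an inspection of the construction shows it can be carried out in time linear in $|\tri|$.

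Second, Lemma~\ref{l-hasse-tw} bounds $\tw(\hasse(\tri))$ by a linear function of $\tw(\dual(\tri))$, so $\{\hasse(\tri) : \tri \in K\}$ is a class of edge-coloured graphs (over the fixed colour set from Definition~\ref{d-hasse}, of size $k = \sum_{i=1}^{d+1} i!$) of universally bounded treewidth, and hence a valid input class for Theorem~\ref{t-courcelle-coloured}. Third, I translate $\phi$ to the corresponding edge-coloured-graph formula $\overline{\phi}$ using Lemma~\ref{l-hasse-mso}, which provides a bijection between solutions on $\tri$ and solutions on $\hasse(\tri)$. Applying the appropriate variant of Theorem~\ref{t-courcelle-coloured} to $\hasse(\tri)$ and $\overline{\phi}$ then solves the translated problem in time $O(|\hasse(\tri)|) = O(|\tri|)$, as required.

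The three variants differ only in bookkeeping. For the decision variant the bijection immediately gives $\tri \models \phi$ if and only if $\hasse(\tri) \models \overline{\phi}$. For the extremum variant, the translation in Lemma~\ref{l-hasse-mso} maps a set of $i$-faces of $\tri$ bijectively to a set of corresponding nodes of $\hasse(\tri)$ of the same cardinality, so the linear objective $g(|A_1|,\ldots,|A_t|)$ evaluates identically on corresponding solutions and their minima coincide. For the evaluation variant I transfer each weight function $w_i$ on faces of $\tri$ to a weight function on the corresponding nodes of $\hasse(\tri)$, and assign trivial weights (namely $0$ for additive or $1$ for multiplicative) to the extra node $\emptyset$ and to every arc of $\hasse(\tri)$, so that the additive or multiplicative total computed by Theorem~\ref{t-courcelle-coloured} matches the quantity demanded on $\tri$ term for term.

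No single step presents a real obstacle, since all the heavy lifting has been carried out in Lemmas~\ref{l-hasse-size}, \ref{l-hasse-tw} and \ref{l-hasse-mso} and in Theorem~\ref{t-courcelle-coloured}. The only points requiring care are the routine verifications that set-variable cardinalities and face weights transfer faithfully through the Hasse-diagram encoding, in direct analogy with the corresponding bookkeeping at the end of the proof of Theorem~\ref{t-courcelle-coloured}.
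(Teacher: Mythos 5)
Your proposal is correct and follows essentially the same route as the paper's own proof: translate $\phi$ to $\overline{\phi}$ via Lemma~\ref{l-hasse-mso}, build $\hasse(\tri)$ and invoke Lemmata~\ref{l-hasse-size} and~\ref{l-hasse-tw} to control its size and treewidth, then apply Theorem~\ref{t-courcelle-coloured}, with the same bookkeeping for set cardinalities and weights. The only difference is that you spell out the trivial weights on the node $\emptyset$ and the arcs slightly more explicitly than the paper does.
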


\begin{proof}
    The proof is essentially the same as for Theorem~\ref{t-courcelle-coloured}
    (Courcelle's theorem for edge-coloured graphs).
    Let $P$ be any such problem, and let $\phi$ be its underlying
    MSO sentence or formula.  Using Lemma~\ref{l-hasse-mso}
    we can translate $\phi$ to $\overline{\phi}$,
    yielding a new problem $\overline{P}$ on edge-coloured graphs.
    Then, for any triangulation $\tri$ given as input to $P$,
    we construct the Hasse diagram $\hasse(\tri)$ and solve
    $\overline{P}$ for $\hasse(\tri)$ instead.
    For extremum problems the size of each set (and hence the value of the
    extremum) does not change,
    and for evaluation problems we copy the input weights from $i$-faces
    of $\tri$ directly to the corresponding nodes of $\hasse(\tri)$.

    Noting that $d$ is fixed,
    Lemma~\ref{l-hasse-size} shows that
    $\hasse(\tri)$ has size $O(|\tri|)$ and can therefore be constructed
    in $O(|\tri|)$ time, and
    Lemma~\ref{l-hasse-tw} ensures that the treewidth
    $\tw(\hasse(\tri))$ is universally bounded.
    The result now follows directly from
    the edge-coloured variants of Courcelle's theorem
    (Theorem~\ref{t-courcelle-coloured}).
\end{proof}

%%%%%%%%%%%%%%%%%%%%%%%%%%%%%%%%%%%%%%%%%%%%%%%%%%%%%%%%%%%%%%%%%%%%%%%%
%
%   Applications
%
%%%%%%%%%%%%%%%%%%%%%%%%%%%%%%%%%%%%%%%%%%%%%%%%%%%%%%%%%%%%%%%%%%%%%%%%

\section{Applications} \label{s-app}

We present several applications of our main result (Theorem~\ref{t-tri});
most are in
the realm of \emph{3-manifold topology}, where algorithmic questions are
often solvable but highly complex, and parameterised complexity
is just beginning to be explored.
\begin{itemize}
    \item We recover two of the earliest fixed-parameter tractability results
    on 3-manifolds as a direct corollary of Theorem~\ref{t-tri}.
    The underlying problems are a decision problem on detecting
    taut angle structures \cite{burton13-taut},
    and an extremum problem on optimal discrete Morse matchings
    \cite{burton13-morse}.
    \item We prove two new fixed-parameter tractability results.
    These include a $d$-dimensional generalisation of the
    discrete Morse matching result,
    plus a result on computing the powerful Turaev-Viro invariants for
    3-manifolds.
\end{itemize}

Although ``treewidth of the dual graph'' might seem artificial as a parameter,
this is both natural and useful for 3-manifold
triangulations---there are many common constructions in 3-manifold topology
that are conducive to small treewidth even when the
total number of tetrahedra is large.  For instance:
\begin{itemize}
    \item Dehn fillings do not increase treewidth when performed in
    the natural way by attaching layered solid tori \cite{jaco03-0-efficiency}.
    \item The ``canonical'' triangulations of arbitrary Seifert fibred spaces
    over the sphere have a universally bounded treewidth of just
    two \cite{burton13-regina}. % TODO: Check.
    \item Building a complex 3-manifold triangulation from smaller
    blocks with ``narrow'' $O(1)$-sized connections can also keep treewidth
    small.  See for instance JSJ decompositions \cite{jaco79-jsj-book} or the
    bricks of Martelli and Petronio \cite{martelli02-decomp},
    where each connection involves just two triangles.
\end{itemize}

%%%%%%%%%%%%%%%%%%%%%%%%%%%%%%%%%%%%%%%%%%%%%%%%%%%%%%%%%%%%%%%%%%%%%%%%

\subsection{Taut angle structures}
\label{s-taut}

Given a 3-dimensional triangulation $\tri$, a \emph{taut angle structure}
assigns internal dihedral angles $0,0,0,0,\pi,\pi$ to the six edges
of each tetrahedron of $\tri$ so that:
\begin{itemize}
    \item the two $\pi$ angles are assigned to opposite edges in each
    tetrahedron (as in Figure~\ref{fig-taut-pi});
    \item for each edge $e$ of the triangulation, the sum of all angles
    assigned to $e$ is precisely $2\pi$ (as in Figure~\ref{fig-taut-sum}).
\end{itemize}

\begin{figure}[tb]
    \centering
    \subfigure[The $\pi$ angles in each tetrahedron are assigned to
        opposite edges\label{fig-taut-pi}]{%
        \qquad\qquad\includegraphics[scale=1.0]{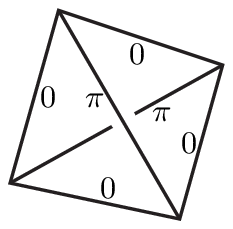}\qquad\qquad}
    \qquad
    \subfigure[The angle sum around each edge is $2\pi$\label{fig-taut-sum}]{%
        \includegraphics[scale=1.0]{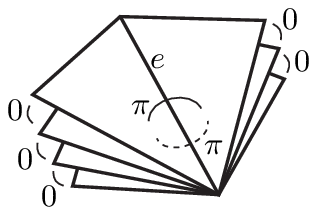}}
    \caption{Defining a taut angle structure}
    \label{fig-taut}
\end{figure}

Essentially, a taut angle structure shows how the tetrahedra can be
``squashed flat'' in a manner that is globally consistent.  They are
typically used in the context of \emph{ideal triangulations}
(where $\tri$ becomes a non-compact 3-manifold after its vertices are removed),
and they play an interesting role in linking the combinatorics of a
triangulation with the geometry and topology of the underlying 3-manifold
\cite{hodgson11-veering,lackenby00-taut}.

\begin{problem}
    \textsc{taut angle structure} is the decision problem whose
    input is a 3-dimensional triangulation $\tri$, and whose output is
    \texttt{true} or \texttt{false} according to whether there exists
    a taut angle structure on $\tri$.
\end{problem}

Na{\"i}vely, this problem can be solved using an exponential-sized
combinatorial search through all $3^{|\tri|}$ possible
assignments of angles---in each tetrahedron there are three choices for
which pair of opposite edges receive the angles $\pi$.  For small
treewidth triangulations, however, it has been shown that we can do much
better:
\begin{theorem}[B.-Spreer \cite{burton13-taut}] \label{t-taut}
    The problem \textsc{taut angle structure} is linear-time
    fixed-parameter tractable, where the parameter is the treewidth of
    the dual graph of the input triangulation.

    More specifically, for any class $K$ of 3-dimensional triangulations
    whose dual graphs have universally bounded treewidth $\leq k$,
    we can solve \textsc{taut angle structure} for any input
    triangulation $\tri \in K$ in time
    $O(3^{7k}k \cdot |\tri|)$.
\end{theorem}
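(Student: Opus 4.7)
The plan is to apply the decision variant of Theorem~\ref{t-tri} after encoding \textsc{taut angle structure} as an MSO sentence $\phi$ on $3$-dimensional triangulations; this already gives linear-time fixed-parameter tractability in $\tw(\dual(\tri))$. The sharper explicit bound $O(3^{7k}k\cdot|\tri|)$ is the one established in \cite{burton13-taut} via a direct tree-decomposition dynamic programme, so Theorem~\ref{t-tri} is needed only to recover the FPT portion of the statement.

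A taut structure on a single tetrahedron is determined by which of the three pairs of opposite local edges, namely $\{01,23\}$, $\{02,13\}$, or $\{03,12\}$, carries the two $\pi$ angles. I will therefore introduce three existentially quantified set variables $S_0,S_1,S_2$ ranging over top-dimensional simplices, together with a short MSO clause asserting that $\{S_0,S_1,S_2\}$ partitions the simplices. This part is routine, requiring only $\forall s$, $\in$, and boolean connectives.

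The core of $\phi$ encodes the condition that the angle sum around each edge $f^{(1)}$ equals $2\pi$, i.e.\ that exactly two of the tetrahedral corners incident with $f^{(1)}$ are labelled $\pi$. For each unordered pair $\{i,j\}\subset\{0,1,2,3\}$ I build a short sub-formula $\Pi_{ij}(f^{(1)},t)$ asserting that $(f^{(1)}\leq_{ij} t)\vee(f^{(1)}\leq_{ji} t)$ holds and that $t$ belongs to the $S_k$ corresponding to the opposite-edge pair containing $\{i,j\}$ (so that local edge $\{i,j\}$ of $t$ receives a $\pi$ label). The principal subtlety is the ``exactly two'' requirement, which MSO cannot count directly: I will encode it as ``at least two'' $\wedge$ ``at most two'', the former a $\exists t_1\exists t_2$ disjunction over the $6^2$ possible local-edge patterns and the latter a $\forall t_1\forall t_2\forall t_3$ statement forbidding three pairwise-distinct $\Pi$-incidences at $f^{(1)}$. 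Distinctness of two incidences $(t_a,\{i,j\})$ and $(t_b,\{i',j'\})$ means $t_a\neq t_b$ or $\{i,j\}\neq\{i',j'\}$; since the local pairs are hard-coded into the formula, this reduces to a finite boolean check. The hard part is really just this bookkeeping: two distinct local edges of a \emph{single} tetrahedron can be identified with the same triangulation edge when facet gluings fold a tetrahedron onto itself, so one cannot naively conflate ``two $\pi$-incidences at $f^{(1)}$'' with ``two tetrahedra containing $f^{(1)}$ in their $\pi$-pair''.

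Since $d=3$ is fixed, $\phi$ has constant length, so applying Theorem~\ref{t-tri} to $\phi$ yields an algorithm deciding \textsc{taut angle structure} in time $O(|\tri|)$ on any class of triangulations with universally bounded dual-graph treewidth. Combined with the quantitative bound from \cite{burton13-taut}, this delivers the full statement.
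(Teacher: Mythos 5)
Your proposal matches the paper's own treatment: the paper likewise only recovers the FPT portion from Theorem~\ref{t-tri} (explicitly disclaiming the constant $3^{7k}k$, which comes from the dynamic programme in \cite{burton13-taut}), uses three set variables partitioning the tetrahedra by which opposite edge pair carries the $\pi$ angles, and flags the same subtlety that a single tetrahedron can contribute two $\pi$-incidences to one triangulation edge. Your ``at least two $\wedge$ at most two'' encoding of the angle-sum condition is a correct way to carry out the counting the paper leaves as an outline.
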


We observe now that we can recover the linear-time fixed-parameter
tractability result directly from Theorem~\ref{t-tri}
(though we do not recover the precise ``constant'' $3^{7k}k$).
All we need to do is express the existence of a taut angle structure
using MSO logic.  The full MSO sentence is messy, and so we merely
outline the key ideas:
\begin{itemize}
    \item To encode the assignment of angles,
    we introduce set variables $T_1,T_2,T_3$ that partition the
    tetrahedra of the input triangulation $\tri$.
    The set $T_1$ represents the tetrahedra with
    $\pi$ angles on opposite edges 01 and 23,
    set $T_2$ represents the tetrahedra with
    $\pi$ angles on opposite edges 02 and 13, and
    set $T_3$ represents the tetrahedra with
    $\pi$ angles on opposite edges 03 and 12.

    \item To enforce the $2\pi$ angle sum criterion,
    we introduce an edge variable $f^{(1)}$ and ensure that
    $\forall f^{(1)}$, the edge $f^{(1)}$ is assigned
    the value $\pi$ precisely twice.
    Here a \emph{single} assignment of $\pi$ to $f^{(1)}$ corresponds
    to some simplex $s$ for which:
    \[ \begin{array}{l}
        s \in T_1\ \mathrm{and}\ \left(
        f^{(1)} \leq_{01} s \vee
        f^{(1)} \leq_{10} s \vee
        f^{(1)} \leq_{23} s \vee
        f^{(1)} \leq_{32} s\right);\ \mathrm{or} \\
        s \in T_2\ \mathrm{and}\ \left(
        f^{(1)} \leq_{02} s \vee
        f^{(1)} \leq_{20} s \vee
        f^{(1)} \leq_{13} s \vee
        f^{(1)} \leq_{31} s\right);\ \mathrm{or} \\
        s \in T_3\ \mathrm{and}\ \left(
        f^{(1)} \leq_{03} s \vee
        f^{(1)} \leq_{30} s \vee
        f^{(1)} \leq_{12} s \vee
        f^{(1)} \leq_{21} s\right).
    \end{array} \]
    We must piece together the full MSO clause with care, since
    $f^{(1)}$ may receive two assignments of $\pi$ from the same simplex
    (e.g., a simplex $s \in T_1$ for which both
    $f^{(1)} \leq_{01} s$ and $f^{(1)} \leq_{23} s$).
\end{itemize}

By formalising these ideas into a full MSO sentence,
the linear-time fixed-parameter tractability of
\textsc{taut angle structure}
follows directly from the main result of this paper (Theorem~\ref{t-tri}).

%%%%%%%%%%%%%%%%%%%%%%%%%%%%%%%%%%%%%%%%%%%%%%%%%%%%%%%%%%%%%%%%%%%%%%%%

\subsection{Discrete Morse matchings}
\label{s-morse}

In essence, discrete Morse theory offers a way to study the
``topological complexity'' of a triangulation.  The idea is to
effectively quarantine the topological content of a triangulation
into a small number of ``critical faces''; the remainder of the
triangulation then becomes ``padding'' that is topologically unimportant.

A key problem in discrete Morse theory is to find an
\emph{optimal Morse matching}, where the number of critical faces is
as small as possible.  Solving this problem yields important topological
information \cite{dey99-computop}, and has a number of practical
applications \cite{gunther12-morse-smale,lewiner04-morse}.

We give a very brief overview of the key concepts here; see
the very accessible paper \cite{forman02-morse} for details.
Morse matchings are typically defined in terms of the Hasse diagram of a
simplicial complex.  Here we work with the coloured Hasse diagram
instead, which allows us to port the necessary concepts to the
more general $d$-dimensional triangulations
that we use in this paper.

\begin{defn}
    Let $\tri$ be a $d$-dimensional triangulation with coloured Hasse
    diagram $\hasse(\tri)$.  Let $V^{(i)}$ denote the set of nodes of
    $\hasse(\tri)$ that correspond to $i$-faces of $\tri$ for each
    $i=0,\ldots,d$, and let $E$ denote the set of arcs of $\hasse(\tri)$.

    A \emph{Morse matching} on $\tri$ is a set of arcs $M \subseteq E$
    with the following properties:
    \begin{itemize}
        \item The arcs in $M$ are \emph{disjoint} (i.e., no two are
        incident with a common node),
        and no arc in $M$ is incident with the empty node $\emptyset$.
        \item For each $i=0,\ldots,d-1$, there are
        no \emph{alternating cycles} between $V^{(i)}$ and $V^{(i+1)}$.
        That is, $\hasse(\tri)$ has no cycle
        whose nodes alternate between $V^{(i)}$ and $V^{(i+1)}$,
        and whose arcs alternate between $M$ and $E \backslash M$.
    \end{itemize}

    For a given Morse matching $M$, a \emph{critical $i$-face} of $\tri$
    is an $i$-face whose corresponding node $v \in V^{(i)}$ is not incident
    with any arc $e \in M$.  We let $c(M)$ denote the total number of
    critical faces, so $c(M) = \sum_{i=0}^d |V^{(i)}| - 2|M|$.
\end{defn}

\begin{example}
    Consider the coloured Hasse diagram seen earlier in
    Figure~\ref{fig-hasse}.  This diagram has several pairs of parallel arcs,
    none of which can be used in a Morse matching (since this would
    yield an alternating cycle of length two).  Therefore the largest
    Morse matching possible has just $|M|=1$ arc (either the arc from
    $e$ to $\Delta_1$, or the arc from $e$ to $\Delta_2$).
    Such a matching has $c(M)=4$ critical faces (one 2-face, two 1-faces,
    and one 0-face).
\end{example}

\begin{problem}
    \textsc{$d$-dimensional optimal Morse matching} is the
    extremum problem whose input is a $d$-dimensional triangulation $\tri$,
    and whose output is the minimum of $c(M)$ over all Morse matchings
    $M$ on $\tri$.
\end{problem}

Even in dimension $d=3$ the related decision problem
is NP-complete \cite{joswig06-morse},
but again for small treewidth triangulations we can do better:

\begin{theorem}[B.-Lewiner-Paix{\~a}o-Spreer \cite{burton13-morse}]
    The problem \textsc{3-dimensional optimal Morse matching} is linear-time
    fixed-parameter tractable, where the parameter is the treewidth of
    the dual graph of the input triangulation.

    More specifically, for any class $K$ of 3-dimensional triangulations
    whose dual graphs have universally bounded treewidth $\leq k$,
    we can solve the problem for any input
    triangulation $\tri \in K$ in time
    $O(4^{k^2+k} \cdot k^3 \cdot \log k \cdot |\tri|)$.
\end{theorem}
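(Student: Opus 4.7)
The plan is to apply Theorem~\ref{t-tri} by expressing optimal Morse matching as a restricted MSO extremum problem on $3$-dimensional triangulations. Since $c(M) = \sum_{i=0}^3 |F_i| - 2|M|$ and the total face counts $|F_i|$ can be computed from $\tri$ in linear time, minimising $c(M)$ reduces to maximising $|M|$, which fits the extremum framework up to an affine adjustment of the objective.

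To encode a candidate matching $M$ using only face variables, I exploit the observation that if $f \leq_\pi g$ then the $i$-face $f$ is uniquely determined by the $(i+1)$-face $g$ and the colour $\pi = \pi_0 \ldots \pi_i$, since $f$ is forced to be the subface of $g$ spanned by the vertices $\pi_0, \ldots, \pi_i$. I therefore introduce a single free set variable $D_\pi$ for each subface type $\pi$, ranging over the upper endpoints of matched arcs of colour $\pi$. The number of such variables is $\sum_{i=1}^{d+1} i!$, a constant when $d=3$. The MSO formula then expresses the matching conditions: no face belongs to two different $D_\pi$'s; no $i$-face $f$ is simultaneously the upper endpoint of a matched arc (i.e.\ $f \in D_{\pi'}$ for some $\pi'$ at level $i-1 \to i$) and the lower endpoint of another matched arc (i.e.\ $f \leq_\pi g$ for some $g \in D_\pi$ at level $i \to i+1$); and no arc incident to the empty face is selected. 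Under this encoding $|M| = \sum_\pi |D_\pi|$, so the extremum objective is the rational linear function $-2\sum_\pi |D_\pi|$.

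The last ingredient is the no-alternating-cycle condition. I would use the standard reformulation that $M$ is a Morse matching precisely when the directed graph obtained from the Hasse diagram by reversing the arcs of $M$ is acyclic. Acyclicity of a finite directed graph is MSO-expressible by the well-foundedness principle: every non-empty vertex set contains a source. I therefore universally quantify a set variable $S$ over all faces of $\tri$ and assert that whenever $S$ is non-empty some $x \in S$ has no incoming modified-Hasse arc from $S$. The predicate ``$y \to x$ in the modified Hasse diagram'' expands into a bounded-size disjunction over colours $\pi$: either $y \leq_\pi x$ with $x \notin D_\pi$ (an unmatched upward arc), or $x \leq_\pi y$ with $y \in D_\pi$ (a matched arc, traversed in the reversed direction).

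The main obstacle I foresee is verifying that these clauses genuinely track arcs of the Hasse diagram: the translation from ``arc of colour $\pi$ lies in $M$'' to ``$x \in D_\pi$ and $y \leq_\pi x$'' relies on the uniqueness of the lower endpoint $y$ given $(x, \pi)$, which is the crux of the encoding. Once assembled, the MSO sentence has constant size in $d=3$, and Theorem~\ref{t-tri} yields a linear-time algorithm for any class $K$ of $3$-dimensional triangulations whose dual graphs have universally bounded treewidth, recovering the stated fixed-parameter tractability (though without the explicit constant $4^{k^2+k} k^3 \log k$ of the original bound).
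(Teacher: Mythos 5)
Your overall strategy matches the paper's: encode the matching by colour-indexed free set variables containing the upper endpoints of matched arcs (your $D_\pi$ are exactly the paper's $W^{(i)}_{\pi_0\ldots\pi_{i-1}}$), rely on the fact that the lower endpoint is determined by the upper face and the colour, and feed the result to Theorem~\ref{t-tri} as a restricted MSO extremum problem. Your treatment of the no-alternating-cycle condition is genuinely different: the paper encodes a union of alternating cycles directly with a second family of set variables $X^{(i)}_{\pi_0\ldots\pi_{i-1}}$ and asserts that no non-empty such configuration exists, whereas you reformulate via acyclicity of the modified Hasse digraph and express acyclicity by well-foundedness. Both are sound; yours is arguably cleaner, with the small wrinkle that MSO set variables on triangulations are typed by dimension, so your quantified set $S$ must be split into $d+1$ sets $S_0,\ldots,S_d$ (or, using disjointness of $M$, restricted to two adjacent levels at a time).

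However, there is a genuine gap in your matching constraints. You forbid a face from lying in two different $D_\pi$'s (at most one matched arc going \emph{down} from each face) and from being simultaneously an upper and a lower endpoint, but you never forbid a face from being the \emph{lower} endpoint of two distinct matched arcs, i.e.\ $f \leq_{\pi_1} g_1$ and $f \leq_{\pi_2} g_2$ with $g_1 \in D_{\pi_1}$, $g_2 \in D_{\pi_2}$ and $(g_1,\pi_1)\neq(g_2,\pi_2)$. This is not implied by your other clauses, nor is it caught by your acyclicity test: in the paper's Klein bottle example one may take $\Delta_1 \in D_{02}$ and $\Delta_2 \in D_{20}$, so that the edge $e$ is matched twice; in the modified digraph $e$ is then a sink, the digraph is acyclic, and all your stated clauses are satisfied, yet this is not a matching. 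Your optimisation would return $|M|=2$ and hence $c=2$ critical faces instead of the correct value $4$. The fix is routine---add the bounded disjunction of forbidden patterns above (this is exactly what the paper's ``long clause ensuring each face is involved in at most one matching'' covers)---but as written the encoding computes the wrong optimum.
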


Again we show now that we can recover the
linear-time fixed-parameter tractability result directly from
Theorem~\ref{t-tri} (but not the precise ``constant''
$4^{k^2+k} \cdot k^3 \cdot \log k$).
Moreover, we generalise this to arbitrary dimensions:

\begin{theorem} \label{t-morse}
    For fixed dimension $d \in \N$ and
    any class $K$ of $d$-dimensional triangulations whose dual
    graphs have universally bounded treewidth,
    we can solve \textsc{$d$-dimensional optimal Morse matching}
    for triangulations $\tri \in K$ in time $O(\tri)$
    under the uniform cost measure.
\end{theorem}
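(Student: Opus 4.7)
The plan is to cast \textsc{$d$-dimensional optimal Morse matching} as a restricted MSO extremum problem on $d$-dimensional triangulations and then invoke the extremum variant of Theorem~\ref{t-tri}. To encode the matching $M$, I introduce for each $i=0,\ldots,d-1$ and each ordered sequence $\pi=\pi_0\ldots\pi_i$ of distinct integers drawn from $\{0,\ldots,i+1\}$ a free set variable $A_{i,\pi}$ of $(i+1)$-faces; the intended interpretation is that an $(i+1)$-face $g$ lies in $A_{i,\pi}$ exactly when the Hasse arc joining the $i$-subface $f\leq_\pi g$ to $g$ belongs to $M$. Because every non-$\emptyset$ arc of $\hasse(\tri)$ corresponds to a unique pair $(g,\pi)$, this captures every possible matching, and the condition ``no arc of $M$ is incident with $\emptyset$'' holds by construction. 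I also introduce free set variables $C^{(0)},\ldots,C^{(d)}$ intended to hold the critical $i$-faces.

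Next I would assemble the MSO formula $\phi$ stating that the $(A_{i,\pi},C^{(i)})$ really describe a Morse matching with these critical faces. Using the subface relations, the auxiliary predicates
\[ \mathrm{Up}_j(f)\;\equiv\;\exists\pi\,\exists h\;(f\leq_\pi h\wedge h\in A_{j,\pi}),\qquad \mathrm{Down}_j(f)\;\equiv\;\exists\pi'\;(f\in A_{j-1,\pi'}) \]
are MSO-definable. Matching disjointness reduces to the requirement that for every $j$-face $f$ at most one of $\mathrm{Up}_j(f)$ and $\mathrm{Down}_j(f)$ holds and that the witnesses are unique, and criticality becomes $f\in C^{(i)}\leftrightarrow\neg\mathrm{Up}_i(f)\wedge\neg\mathrm{Down}_i(f)$ (with the obvious boundary adjustments at $i=0$ and $i=d$). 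Minimising the rational linear function $g=\sum_{i=0}^{d}|C^{(i)}|$ over all satisfying assignments then reproduces $\min_M c(M)$ exactly, so the restricted MSO extremum variant of Theorem~\ref{t-tri} delivers the claimed $O(|\tri|)$ running time under the uniform cost measure.

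The principal obstacle is the \emph{no alternating cycle} requirement. For each pair of adjacent dimensions $(i,i+1)$ I would apply the standard MSO characterisation of acyclicity to the modified Hasse multigraph in which matched arcs are oriented $V^{(i)}\!\to V^{(i+1)}$ and unmatched arcs $V^{(i+1)}\!\to V^{(i)}$: the multigraph is acyclic iff for every non-empty pair $(X,Y)$ with $X\subseteq V^{(i)}$ and $Y\subseteq V^{(i+1)}$, either some $f\in X$ satisfies $g\in A_{i,\pi}$ for every $\pi$ and every $g\in Y$ with $f\leq_\pi g$, or some $g\in Y$ satisfies $g\notin A_{i,\pi}$ for every $\pi$ and every $f\in X$ with $f\leq_\pi g$. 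The subtle point is that two faces $f\in V^{(i)}$ and $g\in V^{(i+1)}$ may be joined by several Hasse arcs, one for each $\pi$ with $f\leq_\pi g$, and each of these is its own multiarc of the modified graph; keeping the sets $A_{i,\pi}$ separated by $\pi$ handles this correctly, so in particular length-two alternating cycles (which require two distinct $\pi$'s with opposite matching status) are properly forbidden. With this subformula in place $\phi$ is complete, and Theorem~\ref{t-tri} finishes the proof.
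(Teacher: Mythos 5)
Your proposal is correct and follows the same overall strategy as the paper: encode the matching by colour-indexed free set variables of faces (your $A_{i,\pi}$ are, up to an index shift, the paper's $W^{(i)}_{\pi_0\ldots\pi_{i-1}}$, which hold the $i$-faces matched downward along a Hasse arc of colour $\pi_0\ldots\pi_{i-1}$), impose the Morse conditions in MSO, and invoke the extremum variant of Theorem~\ref{t-tri}. The two places you diverge are both legitimate alternatives. First, the objective: the paper avoids explicit critical-face sets by introducing trivial sets $V^{(i)}$ containing all $i$-faces and minimising $\sum_i|V^{(i)}|-2\sum|W^{(i)}_{\pi_0\ldots\pi_{i-1}}|$, whereas you define the $C^{(i)}$ inside $\phi$ and minimise $\sum_i|C^{(i)}|$; yours costs a few extra set variables and a biconditional, the paper's exploits the linearity of $c(M)$ in $|M|$. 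Second, and more substantively, the no-alternating-cycle condition: the paper encodes the \emph{existence} of an alternating cycle as a second family of arc-sets $X^{(i)}_{\pi_0\ldots\pi_{i-1}}$ subject to local degree-two conditions (every face of the putative cycle meets exactly one $W$-arc and one $X$-arc) and then negates it, while you reorient the level-$(i,i{+}1)$ Hasse multigraph according to matched/unmatched status and use the ``every nonempty vertex subset contains a source'' characterisation of digraph acyclicity, quantified over pairs $X\subseteq V^{(i)}$, $Y\subseteq V^{(i+1)}$. Both are standard MSO idioms, and you correctly flag the one trap either encoding must handle, namely parallel Hasse arcs (distinct $\pi$) between the same two faces producing length-two alternating cycles; keeping the sets separated by $\pi$ does handle this. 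No gap.
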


\begin{proof}
    To formulate this as an MSO extremum problem on triangulations:
    \begin{itemize}
        \item For each $i=0,\ldots,d$ we introduce a set variable
        $V^{(i)}$ and a clause to ensure that $V^{(i)}$ contains
        all $i$-faces of $\tri$.  These trivial set variables
        are used in the rational function that we seek to minimise
        (see below).
        \item To encode a Morse matching, we use a family of set
        variables $W^{(i)}_{\pi_0\ldots\pi_{i-1}}$
        for each $i=1,\ldots,d$ and for each sequence
        $\pi_0\ldots\pi_{i-1}$ of distinct integers in $\{0,\ldots,i\}$.
        Each variable $W^{(i)}_{\pi_0\ldots\pi_{i-1}}$ holds all
        $i$-faces of $\tri$ that are matched with an
        $(i-1)$-face using an arc of $\hasse(\tri)$ of colour
        $\pi_0\ldots\pi_{i-1}$.
        \item For each set $W^{(i)}_{\pi_0\ldots\pi_{i-1}}$,
        we add a clause to ensure that the corresponding arcs
        exist in $\hasse(\tri)$:
        $\forall v^{(i)}\ \exists v^{(i-1)}
        \ v^{(i)} \in W^{(i)}_{\pi_0\ldots\pi_{i-1}} \rightarrow
        v^{(i-1)} \leq_{\pi_0\ldots\pi_{i-1}} v^{(i)}$.
        \item We add a (long) clause to ensure that each face of $\tri$
        is involved in at most one matching.
        \item For each $i=1,\ldots,d$, we add a (very long) clause
        to ensure that there are no alternating cycles between
        $i$-faces and $(i-1)$-faces.
        Here we encode an alternating cycle using a second family of set
        variables $X^{(i)}_{\pi_0\ldots\pi_{i-1}}$ that represent arcs
        present in the cycle but not the matching, where:
        \begin{itemize}
        \item for each $i$ and each $\pi_0\ldots\pi_{i-1}$ the sets
        $W^{(i)}_{\pi_0\ldots\pi_{i-1}}$ and $X^{(i)}_{\pi_0\ldots\pi_{i-1}}$
        are disjoint;
        \item each $i$-face $v^{(i)}$ appears in either none of the sets
        $W^{(i)}_{\pi_0\ldots\pi_{i-1}}$ or
        $X^{(i)}_{\pi_0\ldots\pi_{i-1}}$, or else exactly one of the
        sets $W^{(i)}_{\pi_0\ldots\pi_{i-1}}$ and one of the sets
        $X^{(i)}_{\pi_0\ldots\pi_{i-1}}$;
        \item each $(i-1)$-face $v^{(i-1)}$ is involved in precisely
        zero or two relationships of the form
        $v^{(i-1)} \leq_{\pi_0\ldots\pi_{i-1}} v^{(i)}$
        where
        $v^{(i)} \in W^{(i)}_{\pi_0\ldots\pi_{i-1}} \cup
        X^{(i)}_{\pi_0\ldots\pi_{i-1}}$.
        \end{itemize}
    \end{itemize}

    We can now encode \textsc{$d$-dimensional optimal Morse matching}
    as the extremum problem that asks to minimise
    $\sum_i |V^{(i)}| - 2 \sum_i |W^{(i)}_{\pi_0\ldots\pi_{i-1}}|$,
    and the result follows from Theorem~\ref{t-tri}.
\end{proof}

%%%%%%%%%%%%%%%%%%%%%%%%%%%%%%%%%%%%%%%%%%%%%%%%%%%%%%%%%%%%%%%%%%%%%%%%

\subsection{The Turaev-Viro invariants}
\label{s-tv}

The Turaev-Viro invariants are an infinite family of topological
invariants of 3-manifolds \cite{turaev92-invariants}.
For every triangulation $\tri$ of a closed 3-manifold, there is an invariant
$|\tri|_{r,q_0}$ for each integer $r \geq 3$
and each $q_0 \in \C$ for which $q_0$ is a $(2r)$th root of unity
and $q_0^2$ is a primitive $r$th root of unity.
A key property (which justifies the name ``invariants'') is that
the value of $|\tri|_{r,q_0}$ depends only upon the topology of
the underlying 3-manifold, and not the particular choice of
triangulation $\tri$.

% Naive running time:
% v <= n+2 (B.B., JCTA 2011)
% e = n+v <= 2n+2
% time: (r-1)^(2n+2)

The Turaev-Viro invariants can be expressed as sums over combinatorial
objects on $\tri$, and so (unlike many other 3-manifold invariants)
lend themselves well to computation.
Moreover, they have proven extremely useful in practical software settings
for distinguishing between different 3-manifolds
\cite{burton07-nor7,matveev03-algms}.
However, they have a major drawback: computing $|\tri|_{r,q_0}$
requires time $O(r^{2|\tri|} \times \mathrm{poly}(|\tri|))$
under existing algorithms, and so is feasible
only for small triangulations and/or small $r$.

In Theorem~\ref{t-tv} we show again that
we can do much better for small treewidth triangulations:
for fixed $r$,
computing $|\tri|_{r,q_0}$ is linear-time fixed-parameter tractable,
with (as usual) the treewidth of the dual graph as the parameter.

Before proving this result, we give a short outline of how the
Turaev-Viro invariants $|\tri|_{r,q_0}$ are defined.
The formula is relatively detailed and so
our summary here is very brief, with just enough information to prove
the subsequent theorem.  For more information, including
motivations and topological context for these invariants,
we refer the reader to references
such as \cite{matveev03-algms,turaev92-invariants}.

\begin{defn} \label{d-tv}
    Let $\tri$ be a closed 3-manifold triangulation,
    and let $V$, $E$ and $S$ denote the set of all vertices, edges and
    tetrahedra of $\tri$.
    Let $r \geq 3$ be an integer,
    and let $q_0 \in \C$ be a $(2r)$th root of unity
    for which $q_0^2$ is a primitive $r$th root of unity.
    We define the Turaev-Viro invariant $|\tri|_{r,q_0}$ as follows.

    Let $I = \{0,1/2,1,3/2,\ldots,(r-2)/2\}$, so $|I|=r-1$.
    A triple $(i,j,k) \in I^3$ is called \emph{admissible} if
    $i+j+k \in \Z$, $i \leq j+k$, $j \leq i+k$, $k \leq i+j$,
    and $i+j+k \leq r-2$.

    A \emph{colouring} $\theta$ of $\tri$ is a map $\theta\co E \to I$;
    in other words, we ``colour'' each edge of $\tri$ with an element of $I$.
    A colouring $\theta$ is called \emph{admissible}
    if, for each 2-face $f$ of $\tri$, the three edges
    $e_1,e_2,e_3 \in E$ bounding $f$ give rise to an admissible
    triple $(\theta(e_1),\theta(e_2),\theta(e_3))$.

    We make use of several complex constants that depend on $r$ and $q_0$.
    We do not need to give their exact values here;
    details can be found in \cite{turaev92-invariants}.
    These constants are:
    $\alpha \in \C$; $\beta_i \in \C$ for each $i \in I$;
    and $\gamma_{i,j,k,\ell,m,n} \in \C$ for each $i,j,k,\ell,m,n \in I$.
    Note that these constants do not depend
    upon the specific triangulation $\tri$.

    We use these constants as follows.
    Let $\theta$ be an admissible colouring of $\tri$.
    For each vertex $v \in V$ we define $|v|_\theta=\alpha$.
    For each edge $e \in E$ we define $|e|_\theta = \beta_{\theta(e)}$.
    For each tetrahedron $\Delta \in S$ we define
    $|\Delta|_\theta =
    \gamma_{\theta(e_{01}),\theta(e_{02}),\theta(e_{12}),
            \theta(e_{23}),\theta(e_{13}),\theta(e_{03})}$,
    where $e_{ij} \in E$ is the edge of the triangulation that
    joins vertices $i$ and $j$ of $\Delta$.
    For the entire triangulation we define
    \( |\tri|_\theta =
        \left(\prod_{v \in V} |v|_\theta\right)
        \cdot
        \left(\prod_{e \in E} |e|_\theta\right)
        \cdot
        \left(\prod_{\Delta \in S} |\Delta|_\theta\right) \).

    Finally, we define the Turaev-Viro invariant
    $|\tri|_{r,q_0} = \sum_\theta |\tri|_\theta$, where this sum is taken
    over all admissible colourings $\theta$ of $\tri$.
\end{defn}

\begin{theorem} \label{t-tv}
    For any fixed integer $r \geq 3$
    and any class $K$ of closed $3$-manifold triangulations
    whose dual graphs have universally bounded treewidth,
    we can compute any Turaev-Viro invariant
    $|\tri|_{r,q_0}$ for any closed 3-manifold triangulation
    $\tri \in K$ in time $O(\tri)$
    under the uniform cost measure.
\end{theorem}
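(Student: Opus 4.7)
The plan is to cast the computation of $|\tri|_{r,q_0}$ as a multiplicative MSO evaluation problem on $d=3$-dimensional triangulations, and then invoke Theorem~\ref{t-tri}. Since $r$ is fixed, the label set $I$ has fixed size $r-1$, so an admissible colouring $\theta\co E\to I$ can be encoded by partitioning the edges into $r-1$ free set variables $E_0, E_{1/2}, \ldots, E_{(r-2)/2}$. The critical observation is that $|\tri|_\theta$ is a product of purely local factors, so if every factor is made to depend only on which free set variable an object belongs to, the total of $|\tri|_\theta$ over $\theta$ will fit directly into the format of an MSO evaluation problem.

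Concretely, I would introduce the following free set variables: $V^{(0)}$, intended to hold every vertex of $\tri$; the family $\{E_i \mid i \in I\}$, partitioning the edges according to $\theta$; and a family $\{S_c \mid c \in I^6\}$ on tetrahedra, where $c = (c_{01},c_{02},c_{12},c_{23},c_{13},c_{03})$ and $\Delta \in S_c$ is intended to mean that the six edges of $\Delta$ receive precisely these colours in the convention of Definition~\ref{d-tv}. The MSO formula $\phi$ then asserts: (i)~$V^{(0)}$ equals the set of all $0$-faces; (ii)~the $E_i$ partition the $1$-faces; (iii)~admissibility at every $2$-face, expressed as a finite disjunction over the finitely many admissible triples, with the three bounding edges of a $2$-face $f^{(2)}$ located via the subface relations $e^{(1)} \leq_{\pi_0\pi_1} f^{(2)}$; and (iv)~for every tetrahedron $s$ and every $c \in I^6$, $s \in S_c$ iff for each pair $ij$ the edge $e^{(1)}$ with $e^{(1)} \leq_{ij} s$ belongs to $E_{c_{ij}}$. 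Since $r$ is fixed, both $r-1$ and $(r-1)^6$ are constants, so $\phi$ has constant length and uses a constant number of free set variables; its satisfying assignments are in bijection with admissible colourings of $\tri$.

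Finally, I would supply weight functions $w_{V^{(0)}}\equiv\alpha$ on $0$-faces, $w_{E_i}\equiv\beta_i$ on $1$-faces, and $w_{S_c}\equiv\gamma_{c_{01},c_{02},c_{12},c_{23},c_{13},c_{03}}$ on tetrahedra (weights on other faces irrelevant, set to $1$). Under the multiplicative evaluation each satisfying assignment contributes
\[
\alpha^{|V|} \cdot \prod_{i \in I} \beta_i^{|E_i|} \cdot \prod_{c \in I^6} \gamma_{c}^{|S_c|} \;=\; |\tri|_\theta,
\]
so the total evaluation equals $\sum_\theta |\tri|_\theta = |\tri|_{r,q_0}$, and Theorem~\ref{t-tri} then yields an $O(|\tri|)$-time algorithm under the uniform cost measure over $\C$, as claimed. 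The main subtlety I expect is phrasing clause~(iv): because $e^{(1)} \leq_{ij} s$ is ordered while the edge $\{i,j\}$ of $s$ is not, each reference to ``the edge of $s$ with vertex-pair $\{i,j\}$'' must be written as a disjunction over the two orderings, and one must verify that each $c$ indeed selects a unique edge per pair. With $d=3$ and $r$ both fixed, this is bounded bookkeeping rather than a genuine obstacle.
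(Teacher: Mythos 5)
Your proposal is correct and follows essentially the same route as the paper: encode admissible colourings via free set variables $V$, $\{E_i\}_{i\in I}$, and a family of tetrahedron sets indexed by $I^6$, assign weights $\alpha$, $\beta_i$, $\gamma$ so that each satisfying assignment contributes exactly $|\tri|_\theta$ to the multiplicative evaluation, and invoke Theorem~\ref{t-tri}. The only cosmetic difference is that you enforce admissibility by an explicit clause at each $2$-face, whereas the paper restricts the tetrahedron variables to admissibly-indexed tuples $S_{i,j,k,\ell,m,n}$; both yield the same bijection with admissible colourings.
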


\begin{proof}
    We prove this by framing the computation of
    $|\tri|_{r,q_0}$ as a multiplicative MSO evaluation problem.

    For our MSO formula $\phi$, we define several free variables that together
    encode an admissible colouring $\theta$ of $\tri$.
    These include:
    \begin{itemize}
        \item A single set variable $V$ that stores all vertices of $\tri$.
        \item A set variable $E_i$ for each $i \in I$.
        Each set $E_i$ represents those edges of $\tri$
        that are assigned the colour $i$.
        \item A set variable $S_{i,j,k,\ell,m,n}$ for each
        $i,j,k,\ell,m,n \in I$ where the triples
        $(i,j,k)$, $(k,\ell,m)$, $(m,n,i)$ and $(j,\ell,n)$ are
        all admissible.
        Each set $S_{i,j,k,\ell,m,n}$ represents those tetrahedra whose
        edges $01$, $02$, $12$, $23$, $13$ and $03$ are assigned colours
        $i,j,k,\ell,m,n$ respectively, which means that every 2-face of
        such a tetrahedron will be coloured by an admissible triple.
    \end{itemize}
    We insert clauses into our MSO formula to ensure that:
    \begin{itemize}
        \item The set variable $V$ contains precisely all vertices of $\tri$.
        \item The set variables $E_i$ together partition the edges of $\tri$,
        and the set variables $S_{i,j,k,\ell,m,n}$ together partition the
        tetrahedra of $\tri$.
        \item The colourings $\{E_i\}$ and $\{S_{i,j,k,\ell,m,n}\}$ are
        consistent.  This involves clauses
        $\forall s\forall e^{(1)}
         \ [s \in S_{i,j,k,\ell,m,n} \wedge
           (e^{(1)} \leq_{01} s \vee e^{(1)} \leq_{10} s)] \rightarrow
           e^{(1)} \in E_i$
        and many others of a similar form, where $e^{(1)}$ is an edge
        variable and $s$ is a tetrahedron variable.
    \end{itemize}

    It follows that the solutions to $\tri \models \phi$
    correspond precisely to the admissible colourings of $\tri$.

    We now assign weights for our evaluation problem as follows:
    \begin{itemize}
        \item For the set variable $V$, we assign a weight of
        $\alpha$ to each vertex of $\tri$,
        and a weight of $1$ to all other faces of all other dimensions.
        \item For each set variable $E_i$, we assign a weight of
        $\beta_i$ to each edge of $\tri$,
        and a weight of $1$ to all other faces of all other dimensions.
        \item For each set variable $S_{i,j,k,\ell,m,n}$, we assign a weight of
        $\gamma_{i,j,k,\ell,m,n}$ to each tetrahedron of $\tri$,
        and a weight of $1$ to all other faces of all other dimensions.
    \end{itemize}

    By Definition~\ref{d-tv},
    the Turaev-Viro invariant $|\tri|_{r,q_0}$ is precisely the
    solution to the multiplicative
    evaluation problem defined by this MSO formula and these weights.
    Note that, whilst the MSO formula depends only on $r$
    (which is fixed), the weights depend on both $r$ and $q_0$
    (where $q_0$ is supplied with the input).

    The result now follows directly from Theorem~\ref{t-tri}.
\end{proof}

%%%%%%%%%%%%%%%%%%%%%%%%%%%%%%%%%%%%%%%%%%%%%%%%%%%%%%%%%%%%%%%%%%%%%%%%
%
%   Bibliography
%
%%%%%%%%%%%%%%%%%%%%%%%%%%%%%%%%%%%%%%%%%%%%%%%%%%%%%%%%%%%%%%%%%%%%%%%%

\bibliographystyle{amsplain}
\bibliography{pure}

\newcommand{\noopsort}[1]{}
\providecommand{\bysame}{\leavevmode\hbox to3em{\hrulefill}\thinspace}
\providecommand{\MR}{\relax\ifhmode\unskip\space\fi MR }
% \MRhref is called by the amsart/book/proc definition of \MR.
\providecommand{\MRhref}[2]{%
  \href{http://www.ams.org/mathscinet-getitem?mr=#1}{#2}
}
\providecommand{\href}[2]{#2}
\begin{thebibliography}{10}

\bibitem{aho75}
Alfred~V. Aho, John~E. Hopcroft, and Jeffrey~D. Ullman, \emph{The design and
  analysis of computer algorithms}, Addison-Wesley Publishing Co., Reading,
  Mass.-London-Amsterdam, 1975, Second printing, Addison-Wesley Series in
  Computer Science and Information Processing.

\bibitem{arnborg91-easy}
Stefan Arnborg, Jens Lagergren, and Detlef Seese, \emph{Easy problems for
  tree-decomposable graphs}, J. Algorithms \textbf{12} (1991), no.~2, 308--340.

\bibitem{bodlaender96-linear}
Hans~L. Bodlaender, \emph{A linear-time algorithm for finding
  tree-decompositions of small treewidth}, SIAM J. Comput. \textbf{25} (1996),
  no.~6, 1305--1317.

\bibitem{burton07-nor7}
Benjamin~A. Burton, \emph{Structures of small closed non-orientable 3-manifold
  triangulations}, J. Knot Theory Ramifications \textbf{16} (2007), no.~5,
  545--574.

\bibitem{burton13-regina}
\bysame, \emph{Computational topology with {R}egina: Algorithms, heuristics and
  implementations}, Geometry and Topology Down Under (Craig~D. Hodgson,
  William~H. Jaco, Martin~G. Scharlemann, and Stephan Tillmann, eds.),
  Contemporary Mathematics, no. 597, Amer. Math. Soc., Providence, RI, 2013.

\bibitem{burton13-morse}
Benjamin~A. Burton, Thomas Lewiner, Jo{\~a}o Paix{\~a}o, and Jonathan Spreer,
  \emph{Parameterized complexity of discrete {M}orse theory}, {SCG} '13:
  Proceedings of the 29th Annual Symposium on Computational Geometry, ACM,
  2013, pp.~127--136.

\bibitem{burton14-fpt-enum}
Benjamin~A. Burton and William Pettersson, \emph{Fixed parameter tractable
  algorithms in combinatorial topology}, Preprint, \texttt{arXiv:\allowbreak
  1402.3876}, February 2014.

\bibitem{burton13-taut}
Benjamin~A. Burton and Jonathan Spreer, \emph{The complexity of detecting taut
  angle structures on triangulations}, {SODA} '13: Proceedings of the
  Twenty-Fourth Annual {ACM-SIAM} Symposium on Discrete Algorithms, SIAM, 2013,
  pp.~168--183.

\bibitem{courcelle01-enumeration}
B.~Courcelle, J.~A. Makowsky, and U.~Rotics, \emph{On the fixed parameter
  complexity of graph enumeration problems definable in monadic second-order
  logic}, Discrete Appl. Math. \textbf{108} (2001), no.~1-2, 23--52,
  International Workshop on Graph-Theoretic Concepts in Computer Science
  (Smolenice Castle, 1998).

\bibitem{courcelle93-monadic}
B.~Courcelle and M.~Mosbah, \emph{Monadic second-order evaluations on
  tree-decomposable graphs}, Theoret. Comput. Sci. \textbf{109} (1993),
  no.~1-2, 49--82, International Workshop on Computing by Graph Transformation
  (Bordeaux, 1991).

\bibitem{courcelle87-context-free}
Bruno Courcelle, \emph{On context-free sets of graphs and their monadic
  second-order theory}, Graph-Grammars and their Application to Computer
  Science ({W}arrenton, {VA}, 1986), Lecture Notes in Comput. Sci., vol. 291,
  Springer, Berlin, 1987, pp.~133--146.

\bibitem{courcelle90-rewriting}
\bysame, \emph{Graph rewriting: An algebraic and logic approach}, Handbook of
  Theoretical Computer Science, {V}ol.\ {B}, Elsevier, Amsterdam, 1990,
  pp.~193--242.

\bibitem{dey99-computop}
Tamal~K. Dey, Herbert Edelsbrunner, and Sumanta Guha, \emph{Computational
  topology}, Advances in Discrete and Computational Geometry ({S}outh {H}adley,
  {MA}, 1996), Contemp. Math., vol. 223, Amer. Math. Soc., Providence, RI,
  1999, pp.~109--143.

\bibitem{downey99-param}
R.~G. Downey and M.~R. Fellows, \emph{Parameterized complexity}, Monographs in
  Computer Science, Springer-Verlag, New York, 1999.

\bibitem{dunfield06-random-covers}
Nathan~M. Dunfield and William~P. Thurston, \emph{Finite covers of random
  3-manifolds}, Invent. Math. \textbf{166} (2006), no.~3, 457--521.

\bibitem{flum06}
J.~Flum and M.~Grohe, \emph{Parameterized complexity theory}, Texts in
  Theoretical Computer Science. An EATCS Series, Springer-Verlag, Berlin, 2006.

\bibitem{forman02-morse}
Robin Forman, \emph{A user's guide to discrete {M}orse theory}, S\'em. Lothar.
  Combin. \textbf{48} (2002), Art.\ B48c, 35.

\bibitem{gunther12-morse-smale}
David G{\"u}nther, Jan Reininghaus, Hubert Wagner, and Ingrid Hotz,
  \emph{Efficient computation of {3D} {M}orse-{S}male complexes and persistent
  homology using discrete {M}orse theory}, Vis. Comput. \textbf{28} (2012),
  no.~10, 959--969.

\bibitem{hodgson11-veering}
Craig~D. Hodgson, J.~Hyam Rubinstein, Henry Segerman, and Stephan Tillmann,
  \emph{Veering triangulations admit strict angle structures}, Geom. Topol.
  \textbf{15} (2011), no.~4, 2073--2089.

\bibitem{jaco03-0-efficiency}
William Jaco and J.~Hyam Rubinstein, \emph{0-efficient triangulations of
  3-manifolds}, J. Differential Geom. \textbf{65} (2003), no.~1, 61--168.

\bibitem{jaco79-jsj-book}
William~H. Jaco and Peter~B. Shalen, \emph{Seifert fibered spaces in
  {$3$}-manifolds}, Mem. Amer. Math. Soc. \textbf{21} (1979), no.~220,
  viii+192.

\bibitem{joswig06-morse}
Michael Joswig and Marc~E. Pfetsch, \emph{Computing optimal {M}orse matchings},
  SIAM J. Discrete Math. \textbf{20} (2006), no.~1, 11--25 (electronic).

\bibitem{lackenby00-taut}
Marc Lackenby, \emph{Taut ideal triangulations of 3-manifolds}, Geom. Topol.
  \textbf{4} (\noopsort{2000b}2000), 369--395.

\bibitem{lewiner04-morse}
Thomas Lewiner, Helio Lopes, and Geovan Tavares, \emph{Applications of
  {F}orman's discrete morse theory to topology visualization and mesh
  compression}, IEEE Trans. Vis. Comput. Graphics \textbf{10} (2004), no.~5,
  499--508.

\bibitem{makowsky05-tutte}
J.~A. Makowsky, \emph{Coloured {T}utte polynomials and {K}auffman brackets for
  graphs of bounded tree width}, Discrete Appl. Math. \textbf{145} (2005),
  no.~2, 276--290.

\bibitem{makowsky03-knot}
J.~A. Makowsky and J.~P. Mari{\~n}o, \emph{The parameterized complexity of knot
  polynomials}, J. Comput. Syst. Sci. \textbf{67} (2003), no.~4, 742--756.

\bibitem{martelli02-decomp}
Bruno Martelli and Carlo Petronio, \emph{A new decomposition theorem for
  3-manifolds}, Illinois J. Math. \textbf{46} (2002), 755--780.

\bibitem{matveev03-algms}
Sergei Matveev, \emph{Algorithmic topology and classification of 3-manifolds},
  Algorithms and Computation in Mathematics, no.~9, Springer, Berlin, 2003.

\bibitem{robertson86-algorithmic}
Neil Robertson and P.~D. Seymour, \emph{Graph minors. {II}. {A}lgorithmic
  aspects of tree-width}, J. Algorithms \textbf{7} (1986), no.~3, 309--322.

\bibitem{thurston78-lectures}
William~P. Thurston, \emph{The geometry and topology of 3-manifolds}, Lecture
  notes, Princeton University, 1978.

\bibitem{turaev92-invariants}
Vladimir~G. Turaev and Oleg~Y. Viro, \emph{State sum invariants of
  {$3$}-manifolds and quantum {$6j$}-symbols}, Topology \textbf{31} (1992),
  no.~4, 865--902.

\end{thebibliography}

\end{document}